\renewcommand{\mathcal}{\EuScript}
\theoremstyle{plain}                                                           
\newtheorem{thm}{Theorem}[section]
\newtheorem{lem}[thm]{Lemma}
\newtheorem{prop}[thm]{Proposition}
\newtheorem{hyp}[thm]{Hypothesis}
\theoremstyle{definition}
\newenvironment{ex}
{\pushQED{\qed}\renewcommand{\qedsymbol}{\lower-0.3ex\hbox{$\triangleleft$}}\examplex}
{\popQED\endexamplex}
\newtheorem{rem}[thm]{Remark}
\newcommand{\F}{\mathcal F}
\DeclareMathOperator{\rank}{rank}
\DeclareMathOperator*{\colim}{colim}
\newcommand{\sgn}{\mathrm{sgn}}
\newcommand{\N}{\mathrm N}
\newcommand{\gr}{\mathrm{Gr}}
\newcommand{\operad}[1]{\mathsf{#1}}
\newcommand{\Com}{\operad{Com}}
\newcommand{\field}[1]{\ensuremath{\mathbf{#1}}}
\newcommand{\Q}{\ensuremath{\field{Q}}}        
\newcommand{\C}{\ensuremath{\field{C}}}
\newcommand{\sym}{\ensuremath{\mathbb{S}}}
\newcommand{\Z}{\ensuremath{\field{Z}}} 
\newcommand{\A}{\mathcal{A}}
\newcommand{\MM}{\overline{\mathcal{M}}}
\newcommand{\R}{\mathbf{R}}
\renewcommand{\H}{\mathsf H}
\newcommand{\MHM}{\mathsf{MHM}}
\renewcommand{\S}{\mathsf S}
\newcommand{\D}{\mathbb D}
\newcommand{\B}{\mathbf{B}}
\newcommand{\FI}{\mathrm{FI}}
\title[A spectral sequence for stratified spaces]{A spectral sequence for stratified spaces and configuration spaces of points}
\author{Dan Petersen}
\thanks{}
\email{danpete@math.ku.dk}
\address{}
\begin{document} 
 \maketitle   
 
 \begin{abstract}We construct a spectral sequence associated to a stratified space, which computes the compactly supported cohomology groups of an open stratum in terms of the compactly supported cohomology groups of closed strata and the reduced cohomology groups of the poset of strata. Several familiar spectral sequences arise as special cases. The construction is sheaf-theoretic and works both for topological spaces and for the \'etale cohomology of algebraic varieties. As an application we prove a very general representation stability theorem for configuration spaces of points.
 \end{abstract}

\section{Introduction}

Let $X = \bigcup_{\alpha \in P} S_\alpha$ be a stratified space. By this we mean that the topological space $X$ is the union of disjoint locally closed subspaces $S_\alpha$ called the \emph{strata}, and that the closure of each stratum is itself a union of strata. The set $P$ of strata becomes partially ordered by declaring that $\alpha \leq \beta$ if $\overline S_\alpha \supseteq S_\beta$. 

Let $\chi_c(-)$ denote the compactly supported Euler characteristic of a space. Since this invariant is additive over stratifications, one has an equality
\begin{equation}\label{first}
\chi_c(\overline S_\alpha) = \sum_{\alpha \leq \beta} \chi_c(S_\beta)
\end{equation}
for all $\alpha \in P$. By the M\"obius inversion formula for the poset $P$, it therefore holds that
\begin{equation}\label{second}
\chi_c(S_\alpha) = \sum_{\alpha \leq \beta} \mu_P(\alpha,\beta) \cdot \chi_c(\overline S_\beta)
\end{equation}
where $\mu_P$ is the M\"obius function of the poset. This expresses the simple combinatorial fact that if one knows all the integers $\chi_c(\overline S_\alpha)$, then one can also determine the integers $\chi_c(S_\alpha)$ by inclusion-exclusion. 

Equation \eqref{first} can be upgraded (or ``categorified'') to a relationship between actual cohomology groups. Suppose $\sigma \colon P \to \Z$ is a function such that $\sigma(\alpha) < \sigma(\beta)$ if $\alpha < \beta$. Such a function defines a filtration of $\overline S_\alpha$ by closed subspaces, and the corresponding spectral sequence in compactly supported cohomology reads
\begin{equation} \label{filtrationsseq}E_1^{pq}=\bigoplus_{\substack{\alpha \leq \beta \\ \sigma(\beta)=-p}}H^{p+q}_c(S_\beta,\Z) \implies H^{p+q}_c(\overline S_\alpha,\Z) \end{equation}
By equating the Euler characteristics of the $E_1$ and $E_\infty$ pages of this spectral sequence one recovers Equation \eqref{first}. 

It is then natural to ask whether also the dual Equation \eqref{second} admits a similar interpretation. We point out that the quantity $\mu_P(\alpha,\beta)$ is \emph{also} an Euler characteristic, by Philip Hall's theorem: the M\"obius function $\mu_P(\alpha,\beta)$ equals the reduced Euler characteristic of $\N(\alpha,\beta)$, by which we mean the nerve of the poset $(\alpha,\beta)$, where $(\alpha,\beta)$ denotes an open interval in $P$. (The preceding is valid only if $\alpha < \beta$: in the degenerate case $\alpha=\beta$ it is natural to define the reduced cohomology of $\N(\alpha,\beta)$ to be $\Z$ in degree $-2$, as we explain in Section \ref{posetsection}.) In any case, one can expect such a categorification to also involve the reduced cohomology groups of the poset. 

In this paper, we construct a spectral sequence accomplishing this goal:

\begin{thm}\label{thmA}
	There exists a spectral sequence 
	$$E_1^{pq}=\bigoplus_{\substack{\alpha \leq \beta \\ \sigma(\beta)=p}} \bigoplus_{i+j+2=p+q}H^{j}_c(\overline S_\beta,\widetilde H^i(\N (\alpha,\beta),\Z)) \implies H^{p+q}_c(S_\alpha,\Z). $$
\end{thm}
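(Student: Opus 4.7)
The plan is to categorify the M\"obius inversion formula by constructing an explicit quasi-isomorphism $T^\bullet \simeq j_!\Z_{S_\alpha}$ in the bounded constructible derived category of $\overline S_\alpha$, where $T^\bullet$ is assembled from pushforwards from the closed strata, and then extracting the asserted spectral sequence from a natural filtration on $T^\bullet$ by the value of $\sigma$ on the endpoint of the chain. Concretely, I would set
\[
T^n = \bigoplus_{\alpha = \gamma_0 < \gamma_1 < \cdots < \gamma_n} \iota_{\gamma_n,*}\Z_{\overline S_{\gamma_n}},
\]
where $\iota_\beta\colon \overline S_\beta \hookrightarrow \overline S_\alpha$ denotes the closed embedding, and equip $T^\bullet$ with the alternating \v{C}ech-type differential whose component for an insertion strictly between two existing chain elements is the identity on $\iota_{\gamma_n,*}\Z$, and whose component for an insertion past the endpoint $\gamma_n$ is the natural restriction $\iota_{\gamma_n,*}\Z \to \iota_{\gamma_{n+1},*}\Z$ coming from the closed inclusion $\overline S_{\gamma_{n+1}} \subseteq \overline S_{\gamma_n}$. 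To see that $T^\bullet \simeq j_!\Z_{S_\alpha}$ I would compute stalks: at a point of $S_\delta$ for $\delta \geq \alpha$ the stalk complex is identified, up to a degree shift, with the augmented cochain complex of the nerve of the half-open interval $(\alpha, \delta]$. This poset has a maximum $\delta$ whenever $\delta > \alpha$, so its order complex is contractible and the stalk complex is acyclic; when $\delta = \alpha$ the interval is empty and the stalk reduces to $\Z$ concentrated in degree zero. Consequently the only nonzero cohomology sheaf of $T^\bullet$ is $j_!\Z_{S_\alpha}$ in degree zero.

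Next I would filter $T^\bullet$ by declaring $F^p T^\bullet$ to be the subcomplex spanned by chains whose endpoint $\gamma_n$ satisfies $\sigma(\gamma_n) \geq p$. Middle insertions preserve the endpoint, while end insertions replace it by an element of strictly larger $\sigma$-value, so $F^\bullet$ is a decreasing filtration of $T^\bullet$ by subcomplexes, and its associated graded splits as
\[
\mathrm{gr}^p T^\bullet = \bigoplus_{\substack{\alpha \leq \beta \\ \sigma(\beta) = p}} \iota_{\beta,*}\Z \otimes_{\Z} A_\beta^\bullet,
\]
where $A_\beta^\bullet$ is the complex of free abelian groups whose $n$th term is spanned by the chains $\alpha = \gamma_0 < \gamma_1 < \cdots < \gamma_n = \beta$ with both endpoints pinned. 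A direct inspection identifies $A_\beta^\bullet$ with the doubly shifted reduced cochain complex $\widetilde C^{\bullet - 2}(\N(\alpha,\beta),\Z)$ of the open interval, with the convention for the degenerate case $\beta = \alpha$ spelled out in the introduction being built into the shift. Applying $R\Gamma_c(\overline S_\alpha,-)$ to the filtered complex $T^\bullet$ then produces a spectral sequence converging to $H^{p+q}_c(S_\alpha,\Z)$ whose $E_1$ page reads $\bigoplus_{\alpha \leq \beta,\ \sigma(\beta) = p} \mathbb H^{p+q}_c(\overline S_\beta, A_\beta^\bullet)$.

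To bring this into the form of the theorem I would use that $A_\beta^\bullet$ is a bounded complex of free $\Z$-modules and that $\Z$ is hereditary; consequently $A_\beta^\bullet$ splits non-canonically in the derived category of $\Z$-modules as $\bigoplus_i \widetilde H^i(\N(\alpha,\beta),\Z)[-(i+2)]$, and pushing this splitting forward to a decomposition of the corresponding complex of constant sheaves on $\overline S_\beta$ yields
\[
\mathbb H^{p+q}_c(\overline S_\beta, A_\beta^\bullet) \cong \bigoplus_{i+j = p+q - 2} H^j_c(\overline S_\beta,\widetilde H^i(\N(\alpha,\beta),\Z)),
\]
matching the claimed $E_1$ page. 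The main technical obstacles I anticipate are the sign bookkeeping needed to verify that the \v{C}ech-type differential on $T^\bullet$ really squares to zero, and the additional care required to port the construction verbatim to the \'etale setting with $\Z_\ell$-coefficients; for the latter the splitting argument goes through because $\Z_\ell$ is also a PID.
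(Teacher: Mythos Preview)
Your proposal is correct and is essentially identical to the paper's own argument: the paper defines exactly your complex $T^\bullet$ (denoted $L^\bullet(\F)$ there), proves the quasi-isomorphism with $j_{0!}j_0^{-1}\F$ by the same stalk computation reducing to contractibility of $(\alpha,\delta]$, filters by $\sigma$ of the endpoint, identifies the associated graded with $\widetilde C^{\bullet+2}(0,\beta)\otimes (i_\beta)_\ast(i_\beta)^\ast\F$, and invokes the same ``complex of free abelian groups is quasi-isomorphic to its cohomology'' splitting to obtain the stated $E_1$ page. The only differences are notational (the paper writes the construction for a general sheaf $\F$ and takes $\alpha=0$), and your anticipated obstacles are indeed minor.
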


Taking Euler characteristics of both sides, we recover Equation \eqref{second}. This would seem a very natural question --- given the cohomology of the closed strata, how does one compute the cohomology of open strata? --- and it is close in spirit to the work of Vassiliev \cite{vassilievbook,vassilievicm}. Yet to my knowledge the result is new.

The proof is elementary and completely sheaf-theoretic, and the theorem we prove in the body of the paper is a more general statement that is valid with coefficients given by any sheaf or complex of sheaves $\F$ on $X$. It also works in the setting of $\ell$-adic sheaves, if $X$ is an algebraic variety: in this case, the spectral sequence is a spectral sequence of $\ell$-adic Galois representations. 

As an application of our result we prove a very general representation stability theorem for configuration spaces of points. In particular, a novel feature is that if one is willing to work with Borel--Moore homology (or dually, compact support cohomology), then one can prove homological stability results for an arbitrary topological space $M$ satisfying rather mild hypotheses; to my knowledge, all existing results in the literature prove homological stability for configuration spaces of points on \emph{manifolds}. In this introduction we focus on the case when $M$ is a (possibly singular) algebraic variety, in which case the result is easier to state. 

Let $M$ be a space, and let $\A$ be a finite collection of closed subspaces $A_i \subset M^{n_i}$. We define a configuration space $F_\A(M,n)$, parametrizing $n$ ordered points on $M$ ''avoiding all $\A$-configurations''. For instance, if $\A$ consists only of the diagonal inside $M^2$, then $F_\A(M,n)$ is the usual configuration space of distinct ordered points on $M$. 

\begin{thm}\label{thmB}Let $M$ be a geometrically irreducible $d$-dimensional algebraic variety over a field $\kappa$, and let $\A$ be an arbitrary finite collection of closed subvarieties $A_i \subset M^{n_i}$. 
	\begin{enumerate}
		\item For $\kappa=\C$, the (singular) Borel--Moore homology groups $H_{i+2dn}^{BM}(F_\A(M,n)(\C),\Z)$ form a finitely generated FI-module for all $i \in \Z$. 
		\item The (\'etale) Borel--Moore homology groups $H_{i+2dn}^{BM}(F_\A(M,n)_{\overline \kappa},\Z_\ell(-dn))$ form a finitely generated FI-module in $\ell$-adic $\mathrm{Gal}(\overline\kappa/\kappa)$-representations, for all $i \in \Z$, whenever $\ell$ is a prime different from $\mathrm{char}(\kappa)$.
	\end{enumerate}
\end{thm}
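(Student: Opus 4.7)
The strategy is to apply the spectral sequence of Theorem \ref{thmA} to a canonical, $\FI$-natural stratification of $M^n$, and to show that the resulting $E_1$-page assembles into a finitely generated $\FI$-module; finite generation then propagates to the abutment since finitely generated $\FI$-modules over $\Z$ (resp.\ $\Z_\ell$) form an abelian subcategory closed under subquotients.

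For each index $i$ and each injection $\phi \colon [n_i] \hookrightarrow [n]$, let $A_{i,\phi} \subset M^n$ denote the preimage of $A_i$ under the projection $M^n \to M^{n_i}$ selecting the coordinates $\phi(1), \ldots, \phi(n_i)$. I would take the coarsest stratification of $M^n$ by locally closed subvarieties in which every $A_{i,\phi}$ is a union of closed strata; its open stratum is exactly $F_\A(M,n)$. This stratification is natural under injections $\psi \colon [m] \hookrightarrow [n]$: the projection $M^n \to M^m$ restricts to a smooth morphism $F_\A(M,n) \to F_\A(M,m)$ of relative dimension $d(n-m)$, whose Gysin map gives the $\FI$-module structure on $H^{BM}_{*+2dn}(F_\A(M,n))$. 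Each closed stratum $\overline S_\beta$ is an incidence variety that, up to isomorphism, splits as a product of a ``fixed'' incidence piece---a fibre product of $A_i$'s determined by a bounded piece of combinatorial data---and a ``free'' factor $M^k$ recording the unconstrained coordinates.

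Plugging this stratification into Theorem \ref{thmA} yields the spectral sequence
$$E_1^{pq} = \bigoplus_{\substack{\alpha \leq \beta \\ \sigma(\beta) = p}} \bigoplus_{i+j+2=p+q} H^j_c(\overline S_\beta, \widetilde H^i(\N(\alpha,\beta), \Z)) \Rightarrow H^{p+q}_c(F_\A(M,n), \Z).$$
Organizing closed strata by combinatorial type, within each type the free factor $M^k$ gives an induced $\FI$-module whose generation degree is bounded by the size of the type's incidence pattern, while the poset cohomology $\widetilde H^i(\N(\alpha,\beta))$ depends only on the type of the interval $[\alpha,\beta]$ and contributes uniformly across $n$. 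Since $\A$ is finite, only finitely many types contribute at each cohomological degree, so the $E_1$-page is a finitely generated $\FI$-module; Noetherianity of finitely generated $\FI$-modules then propagates the conclusion through the spectral sequence differentials to the abutment. Poincar\'e--Lefschetz duality converts $H^*_c$ of the open stratum into Borel--Moore homology, and absorbing the duality shift into $2dn$ yields part (1); the identical argument with $\ell$-adic sheaves proves (2), with the Tate twist $\Z_\ell(-dn)$ arising from the duality. The main obstacle is the careful $\FI$-bookkeeping on the $E_1$-page: one must enumerate the finitely many combinatorial types of closed strata and poset intervals, verify that each closed stratum splits as claimed (incidence piece $\times M^k$), and bound the poset cohomology of intervals of a given type uniformly in $n$.
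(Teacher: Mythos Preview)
Your overall strategy---apply the spectral sequence to the diagonal stratification of $M^n$, show the $E_1$-page is a finitely generated FI-module, and invoke noetherianity---matches the paper's. But two steps in your execution fail precisely in the singular case, which is the whole point of the theorem.

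First, you claim that the projection $F_\A(M,n) \to F_\A(M,m)$ is smooth of relative dimension $d(n-m)$, and later that Poincar\'e--Lefschetz duality converts $H^\ast_c$ of the open stratum into Borel--Moore homology with a shift of $2dn$. Both assertions require $M$ (hence $F_\A(M,n)$) to be smooth, or at least a homology manifold. The paper avoids this entirely: it never passes through $H^\ast_c$ and duality, but instead applies Verdier duality to the complex $L^\bullet(\F)$ itself to obtain a Borel--Moore version of the spectral sequence directly (Example~\ref{variants}($1^\circ$)). The FI-structure is likewise not built from a Gysin map but from the twisted-commutative-algebra structure: the open embedding $F_\A(M,S\sqcup T) \hookrightarrow F_\A(M,S)\times F_\A(M,T)$ makes $S\mapsto H_\bullet^{BM}(F_\A(M,S))$ a tca, and multiplication by the fundamental class in $H_{2d}^{BM}(M)\cong\Z$ supplies the FI-maps. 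This works for arbitrary singular $M$.

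Second, your argument that the $E_1$-page is finitely generated over $\Z$ relies on each closed stratum splitting as (incidence piece)$\times M^k$ and on the free factor contributing an ``induced'' FI-module. Over a field this is fine by K\"unneth, but over $\Z$ the cross product $H_\bullet^{BM}(X)\otimes H_\bullet^{BM}(M^k)\to H_\bullet^{BM}(X\times M^k)$ need not be surjective, so the $E_1$-page is not generated by indecomposable strata in the way you need. The paper handles this by dropping to the $E_0$-page: it constructs compatible finitely-generated free chain models $C_\bullet(\overline S_\beta)$ with an on-the-nose K\"unneth isomorphism for decomposable strata (Lemma~\ref{hyp2}), and runs the tca argument there. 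Your sketch does not address this, and without it the integral statement does not follow.
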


In particular, the homology groups $H_{i+2dn}^{BM}(F_\A(M,n),\Q)$ form a  representation stable sequence of $\sym_n$-representations, and the $\sym_n$-invariants $H_{i+2dn}^{BM}(F_\A(M,n)/\sym_n,\Q)$ satisfy homological stability as $n \to \infty$. 

If $M$ is smooth, or at least a homology manifold, we may conclude instead that the cohomology groups $H^{i}(F_\A(M,n),\Z)$ form a finitely generated FI-module, by Poincar\'e duality. See Remark \ref{homologymanifoldremark}. 

The fact that we obtain a finitely generated FI-module over $\Z$ gives a homological stability result with rational coefficients, but it also has interesting consequences for the mod $p$ homology of the unordered configuration spaces: by results of Nagpal \cite{nagpalmodular}, our theorem implies that the groups $H_{i+2dn}^{BM}(F_\A(M,n)/\sym_n,\mathbf F_p)$ become eventually periodic. 

Vakil and Wood \cite{vakilwood} introduced certain configuration spaces $\overline w_\lambda^c(M)$, depending on a partition $\lambda$. For a suitable choice of $\A$, one has $F_\A(M,n)/\sym_n = \overline w_\lambda^c(M)$, so Theorem \ref{thmB} implies in particular a homological stability theorem for the spaces $\overline w_\lambda^c(M)$ as $n \to \infty$, which proves a conjecture of Vakil and Wood \cite[Conjecture F]{vakilwood}. This conjecture has previously been proven by Kupers--Miller--Tran \cite{kupersmillertran}. Compared to Kupers--Miller--Tran, our proof gives the stronger assertion of representation stability, and makes no smoothness assumptions about $M$ (they assume $M$ is a smooth manifold). On the other hand, their proof gives in many cases integral stability for the unordered configuration space, and they give an explicit stability range. The latter should be possible in our setting, too, but we have not done so.

A remark is that Vakil and Wood formulated their conjecture after making point counts of varieties over finite fields, and using the Grothendieck--Lefschetz trace formula to guess what the cohomology should look like. Since the Grothendieck--Lefschetz trace formula concerns \emph{compact support} cohomology, it is in a sense natural that we obtain stronger results when working with compact support cohomology/Borel--Moore homology from the start.

As mentioned, one can prove also a version of Theorem \ref{thmB} for an arbitrary topological space, but the assumptions on $M$ and $\A$ become more cumbersome to state. However, if we let $\A$ be the arrangement leading to the configuration spaces considered by Vakil--Wood, the hypotheses are quite simple: if $M$ is any locally compact topological space with finitely generated Borel--Moore homology groups, and such that there exists an integer $d \geq 2$ for which $H_d^{BM}(M,\Z) \cong \Z$ and $H_i^{BM}(M,\Z)=0$ for $i>d$, then $H_{i+dn}^{BM}(F_\A(M,n),\Z)$ is a finitely generated FI-module if $d$ is even; if $d$ is odd, one needs to twist by the sign representation, and $H_{i+dn}^{BM}(F_\A(M,n),\Z) \otimes \mathrm{sgn}_n$ is a finitely generated FI-module. 

\section{Generalities on posets}
\label{posetsection}

Let $P$ be a poset, always assumed to be finite. We define its \emph{nerve}  $\N P$ to be the simplicial complex with vertices the elements of $P$, and a subset $S \subseteq P$ forms a face if and only if all elements of $S$ are pairwise comparable. The corresponding simplicial set is exactly the usual nerve of $P$, when $P$ is thought of as a category.

We use $\widetilde C_\bullet(\Delta)$ to denote the \emph{reduced cellular chains} of a simplicial complex $\Delta$. The group $\widetilde C_i(\Delta)$ is free abelian on the set of $i$-dimensional faces; we include the empty set as a $(-1)$-dimensional face. The homology of this chain complex  is $\widetilde H_\bullet(\Delta,\Z)$. However, we will prefer to work with cohomology. The usual definition would be to set $$\widetilde C^\bullet(\Delta) = \mathrm{Hom}(\widetilde C_\bullet(\Delta),\Z)$$
but we will find it more convenient to use the distinguished basis of $\widetilde C_i(\Delta)$ to consider $\widetilde C^i(\Delta)$ as \emph{also} being the free abelian on the set of $i$-dimensional faces; then the differential becomes an alternating sum over ways of \emph{adding} an element to a face.

If $x\leq y$ in $P$, we denote by $\widetilde C^\bullet(x,y)$ the chain complex which in degree $d$ is the free abelian group spanned by the increasing sequences
$$ x = z_{-1} < z_0 < z_1 < \ldots < z_{d} < z_{d+1} = y,$$
and whose differential $\partial : \widetilde C_d \to \widetilde C_{d+1}$ is an alternating sum over ways of adding an element to the sequence. For $x<y$, $\widetilde C^\bullet(x,y)$ is equal to $\widetilde C^\bullet(\N(x,y))$, where $(x,y)$ denotes an open interval in $P$; for $x=y$, it consists of $\Z$ placed in degree $-2$. We denote by $\widetilde H^\bullet(x,y)$ the cohomology of this cochain complex.

Let $\sigma \colon P \to \Z$ be a strictly increasing function, e.g. a grading or a linear extension. 

\begin{prop}\label{posetseq}For $x < y$ in $P$, there exists a spectral sequence
$$ E_1^{pq} = \bigoplus_{\substack{x \leq z \leq y\\ \sigma(z)=p}} \widetilde H^{p+q-1}(x,z)$$
converging to zero.\end{prop}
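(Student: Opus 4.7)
The plan is to realize the spectral sequence as the one attached to a bounded decreasing filtration on an acyclic cochain complex. The natural acyclic complex is the reduced (augmented) cochain complex $K^\bullet$ of the nerve $\N(x,y]$ of the half-open interval: for $n \geq 0$, $K^n$ is free abelian on chains $z_0 < z_1 < \ldots < z_n$ of elements of $(x,y]$, $K^{-1} = \Z$ is the augmentation, and the differential is the alternating sum over ways of inserting a new element. Since $(x,y]$ has a maximum element $y$, its nerve is a cone with apex $y$ and hence contractible, so $H^\bullet(K^\bullet) = \widetilde H^\bullet(\N(x,y],\Z) = 0$.

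Now filter $K^\bullet$ by the $\sigma$-value of the top element of a chain: let $F^p K^\bullet$ be the subgroup spanned by those chains $z_0 < \ldots < z_n$ with $\sigma(z_n) \geq p$, placing the augmentation generator of $K^{-1}$ in $F^{\sigma(x)}$ under the convention that its ``top'' is $x$. Since inserting a new element into a chain either preserves the top or strictly increases it, $F^p K^\bullet$ is a subcomplex, and the filtration is bounded in our finite setting ($F^p K^\bullet = K^\bullet$ for $p \leq \sigma(x)$ and $F^p K^\bullet = 0$ for $p > \sigma(y)$). The associated graded piece at level $p$ collects exactly those generators whose top $z$ satisfies $\sigma(z) = p$. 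For $z > x$ with $\sigma(z) = p$, the chains of length $n+1$ with top $z$ are in bijection with $n$-chains in the open interval $(x,z)$, so this summand is identified with $\widetilde C^{n-1}(x,z)$ placed in degree $n$ of $K^\bullet$, and the induced differential — only top-preserving insertions survive modulo $F^{p+1}$ — is precisely the one on $\widetilde C^\bullet(x,z)$. When $p = \sigma(x)$, strict monotonicity of $\sigma$ forces $z = x$, and the augmentation generator of $K^{-1}$ contributes $\Z$ in degree $-1$ of $K^\bullet$, matching the shifted $\widetilde C^{-2}(x,x) = \Z$. Taking cohomology of each graded piece therefore produces
$$ E_1^{pq} = H^{p+q}\bigl(F^p K^\bullet / F^{p+1} K^\bullet\bigr) = \bigoplus_{\substack{x \leq z \leq y \\ \sigma(z) = p}} \widetilde H^{p+q-1}(x,z), $$
and the spectral sequence converges to $H^\bullet(K^\bullet) = 0$.

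The only delicate point is matching the degenerate boundary case $z = x$: one must check that the augmentation summand $K^{-1}$ of the reduced cochain complex correctly incarnates the class $\widetilde H^{-2}(x,x) = \Z$ prescribed by the paper's convention, after the systematic degree-one shift between $K^\bullet$ and $\widetilde C^{\bullet-1}(x,z)$. The verifications that the insertion differential preserves the filtration and that signs agree under the identification of each graded piece with $\widetilde C^{\bullet - 1}(x,z)$ are then routine combinatorics of insertions.
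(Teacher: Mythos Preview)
Your proof is correct and follows essentially the same approach as the paper's: both filter the acyclic reduced cochain complex of $\N(x,y]$ by the $\sigma$-value of the top element of a chain, and identify the graded pieces with $\widetilde C^{\bullet-1}(x,z)$. You are slightly more explicit than the paper in tracking the degenerate augmentation summand $z=x$ and verifying it lands correctly as $\widetilde H^{-2}(x,x)=\Z$, but otherwise the arguments coincide.
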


\begin{proof}
Consider the chain complex $\widetilde C^\bullet=\widetilde C^\bullet (\N(x,y])$, where $(x,y]$ denotes a half-open interval in $P$. Since $(x,y]$ has a unique maximal element its nerve is contractible, so the complex $\widetilde C^\bullet$ is acyclic. We identify $\widetilde C^d$ with the set of increasing sequences
$$ x = z_{-1} < z_0 < z_1 < \ldots < z_{d} \leq y.$$

Define a decreasing filtration on this complex by taking $F^p \widetilde C^d$ to be the span of all sequences with $\sigma(z_d) \geq p$. This makes $\widetilde C^\bullet$ a filtered complex. Consider the quotient
$$ F^p \widetilde C^\bullet/F^{p+1}\widetilde C^\bullet$$
and its induced differential. Then $F^p \widetilde C_d/F^{p+1}\widetilde C_d$ has a basis consisting of sequences such that $\sigma(z_d)$ is \emph{exactly} equal to $p$, and the differential is a sum over all ways of adding an element to the sequence coming \emph{before} $z_d$. It therefore follows that the quotient is isomorphic to the direct sum 
	$$ \bigoplus_{\substack{x \leq z \leq y\\ \sigma(z)=p}} \widetilde C^{\bullet-1}(x,z),$$
by an isomorphism taking the sequence $x = z_{-1} < z_0 < z_1 < \ldots < z_{d} \leq y \in F^p\widetilde C^{d}/F^{p+1}\widetilde C^d$ to the sequence $x = z_{-1} < z_0 < z_1 < \ldots z_{d-1} < z_{d} = z_d$ in $\widetilde C^{d-1}(x,z_d)$.
Thus the spectral sequence associated to this filtration has the required form.	
\end{proof}

\newcommand{\Int}{\mathrm{Int}} 
Let $\Int(P)$ denote the set of pairs $(x,y) \in P \times P$ with $x \leq y$. We define the \emph{M\"obius function}
$$ \mu \colon \Int(P) \to \Z$$
by $\mu(x,y) = \sum_i (-1)^i \rank \widetilde H^i(x,y)$.
\begin{prop}
	If $x <y$, then $\sum_{x \leq z \leq y} \mu(x,z) = 0$.
\end{prop}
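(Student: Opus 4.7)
The plan is to deduce this directly from Proposition \ref{posetseq} by taking Euler characteristics of both sides. The $E_\infty$ page is zero, and since taking cohomology preserves Euler characteristics when all ranks are finite (which they are, because $P$ is finite), the Euler characteristic of the $E_1$ page must also vanish.

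First I would unwind the definition: $\mu(x,z)$ is, by construction, the Euler characteristic $\sum_i (-1)^i \rank \widetilde H^i(x,z)$. The degenerate case $z=x$ is handled by the convention that $\widetilde C^\bullet(x,x)$ is $\Z$ concentrated in degree $-2$, so $\mu(x,x) = 1$, which is the correct normalization to make the identity hold. Next, I would choose any strictly increasing $\sigma \colon P \to \Z$ and apply Proposition \ref{posetseq} to the pair $x < y$ to obtain a spectral sequence with
$$E_1^{pq} = \bigoplus_{\substack{x \leq z \leq y \\ \sigma(z) = p}} \widetilde H^{p+q-1}(x,z)$$
converging to zero.

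The key computation is then to evaluate the Euler characteristic of the $E_1$ page. Substituting $i = p+q-1$, so that $(-1)^{p+q} = -(-1)^i$, and collecting the contributions for each fixed $z$ (summing over all $q$ with $p = \sigma(z)$), yields
$$\sum_{p,q} (-1)^{p+q} \rank E_1^{pq} = -\sum_{x \leq z \leq y} \sum_i (-1)^i \rank \widetilde H^i(x,z) = -\sum_{x \leq z \leq y} \mu(x,z).$$
Because the spectral sequence converges to zero, this Euler characteristic is zero, and the identity follows.

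There is essentially no obstacle here: the content is entirely in Proposition \ref{posetseq}, and this corollary is merely an Euler-characteristic shadow of that statement. The only thing to verify carefully is the index bookkeeping and the sign from the shift $p+q-1$, together with the convention at $z=x$ which contributes the $\mu(x,x) = 1$ term needed to make the alternating sum vanish rather than evaluate to $-1$.
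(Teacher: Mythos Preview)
Your proof is correct and follows exactly the same approach as the paper: both deduce the identity by equating the Euler characteristics of the $E_1$ and $E_\infty$ pages of the spectral sequence of Proposition~\ref{posetseq}. You simply spell out the index and sign bookkeeping in more detail than the paper does.
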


\begin{proof}
	The left hand side is the Euler characteristic of the $E_1$ page of the spectral sequence constructed in Proposition \ref{posetseq}, and the right hand side is the Euler characteristic of the $E_\infty$ page. 
\end{proof}

A consequence of the above is a simple recursive procedure for calculating the M\"obius function: the M\"obius function could equivalently have been defined as 
$$\mu(x,y) = 
\begin{cases} 
1 & \text{if } x = y \\
-\sum_{x \leq z < y} \mu(x,z) & \text{if } x < y 
\end{cases}$$
In most treatments this is taken as the definition of the M\"obius function. The fact that $\mu(x,y)$ for $x<y$ equals the reduced Euler characteristic of the nerve of the interval $(x,y)$ is then called Philip Hall's theorem. We may think of Proposition \ref{posetseq} as a categorification of the usual recursion for the M\"obius function.

\section{The construction and several examples}
\label{construction}
Let $X = \bigcup_{\alpha \in P} S_\alpha$ be a stratified space. By this we mean that the space $X$ is the union of disjoint locally closed subspaces $S_\alpha$ called the \emph{strata}, and that the closure of each $S_\alpha$ is itself a union of strata. By a ``space'' we mean \emph{either}:
\begin{enumerate}
	\item $X$ is a locally compact Hausdorff topological space,
	\item $X$ is an algebraic variety over some field.
\end{enumerate}
In the former case, ``sheaf'' will just mean ``sheaf of abelian groups''; in the latter case, ``sheaf'' will mean ``constructible $\ell$-adic sheaf, for $\ell$ different from the characteristic''.

The set $P$ of strata becomes partially ordered by declaring that $\alpha \leq \beta$ if $\overline S_\alpha \supseteq S_\beta$. We assume for simplicity (and without loss of generality) that $P$ has a unique minimal element $0$, i.e.\ a unique open dense stratum $S_0$. 

For $\alpha \in P$ we denote by $j_\alpha$ the locally closed inclusion $S_\alpha \hookrightarrow X$, and by $i_\alpha$ the inclusion $\overline S_\alpha \hookrightarrow X$ of the closure of a stratum. 


For $d \geq 0$, define a sheaf 
$$ L^d(\F) = \bigoplus_{0 = \alpha_0 < \alpha_1 < \ldots < \alpha_d \in P}  (i_{\alpha_d})_\ast (i_{\alpha_d})^\ast  \F$$
on $X$. In particular, $L_0(\F) = \F$. We may define a differential
$$ L^d(\F) \rightarrow L^{d+1}(\F)$$
as an alternating sum over ways of adding an element to the sequence $\alpha_0 < \alpha_1 < \ldots < \alpha_d$, just as in our definition of $\widetilde C^\bullet(\Delta)$ for a simplicial complex $\Delta$; when the element we add appears at the end of the sequence, i.e., as $\alpha_{d+1} > \alpha_d$, then the differential uses the map 
$$ (i_{\alpha_d})_\ast (i_{\alpha_d})^\ast  \F \to (i_{\alpha_{d+1}})_\ast (i_{\alpha_{d+1}})^\ast  \F$$
obtained from the fact that $i_{\alpha_{d+1}}$ factors through $i_{\alpha_{d}}$. This makes $L^\bullet(\F)$ into a complex, for the same reason that $\widetilde C^\bullet(\Delta)$ is. 

\begin{prop}
The complex $L^\bullet(\F)$ is quasi-isomorphic to $j_{0!}^{}j^{-1}_0\F$, where $j_0$ is the inclusion of the open stratum.  
\end{prop}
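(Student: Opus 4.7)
The plan is to construct a natural augmentation $\varepsilon \colon j_{0!}j_0^{-1}\F \to L^0(\F) = \F$, namely the counit map coming from the open inclusion $j_0$, and then verify that (i) $\varepsilon$ defines a map from the complex concentrated in degree $0$ to $L^\bullet(\F)$, and (ii) the induced map is a quasi-isomorphism. Step (i) amounts to checking that the composition $j_{0!}j_0^{-1}\F \xrightarrow{\varepsilon} L^0(\F) \to L^1(\F)$ vanishes, which can be verified stalkwise: at a point $x \in S_0$, all of the summands $(i_\alpha)_\ast(i_\alpha)^\ast \F$ with $\alpha > 0$ have zero stalk at $x$; at a point $x \notin S_0$, the source already has zero stalk.

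For step (ii) I would check the quasi-isomorphism stalkwise. Fix $x \in S_\beta$. Since $((i_\alpha)_\ast (i_\alpha)^\ast \F)_x = \F_x$ when $\alpha \leq \beta$ and $0$ otherwise, the stalk $L^d(\F)_x$ is freely generated (with coefficients $\F_x$) by sequences $0 = \alpha_0 < \alpha_1 < \ldots < \alpha_d$ with $\alpha_d \leq \beta$, and the maps along the differential are, up to the sign conventions, identities on $\F_x$. In the case $\beta = 0$ the only admissible sequence forces $d=0$, so $L^\bullet(\F)_x = \F_x$ concentrated in degree $0$, and $\varepsilon$ is the identity there; in the case $\beta > 0$, the target $(j_{0!}j_0^{-1}\F)_x$ is zero, so what remains to prove is that $L^\bullet(\F)_x$ is acyclic.

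For this last point, I would observe that, under the identification above, $L^\bullet(\F)_x$ is, up to a degree shift and tensoring by $\F_x$, exactly the reduced cochain complex $\widetilde C^{\bullet-1}(\N((0,\beta]))$ from Section \ref{posetsection} (with the fixed $\alpha_0 = 0$ playing the role of the lower endpoint $z_{-1}$, and $\beta$ playing the role of the upper endpoint). The differential on both sides is the alternating sum over ways of adding an element to the chain. Since the half-open interval $(0,\beta]$ has $\beta$ as a unique maximum, its nerve is contractible (it is the cone on $\N((0,\beta))$ with apex $\beta$), and hence the reduced cochain complex is acyclic. This is exactly the same contractibility input used in the proof of Proposition \ref{posetseq}, now imported into the sheaf setting by taking stalks.

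The argument is really just combinatorics repackaged sheaf-theoretically, so there is no deep obstacle; the only care required is the indexing and the identification of the stalk differential on $L^\bullet(\F)$ with the "add an element" differential from Section \ref{posetsection}, which requires unpacking the natural transformation $(i_{\alpha_d})_\ast(i_{\alpha_d})^\ast \F \to (i_{\alpha_{d+1}})_\ast(i_{\alpha_{d+1}})^\ast \F$ and checking that, at a stalk $x$ with $\alpha_{d+1} \leq \beta$, it reduces to the identity $\F_x \to \F_x$.
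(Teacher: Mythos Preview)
Your proposal is correct and is essentially the same argument as the paper's: both construct the augmentation $j_{0!}j_0^{-1}\F \to L^0(\F)$, check acyclicity of the augmented complex stalkwise, and reduce the case $x \in S_\beta$ with $\beta \neq 0$ to the contractibility of $\N((0,\beta])$. Your write-up is slightly more explicit about why the augmentation is a chain map and about the precise degree shift, but the substance is identical.
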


\begin{proof}
	We show that
	$$ j_{0!}^{}j^{-1}_0\F \to L^0(\F) \to L^1(\F) \to L^2(\F) \to \ldots $$
	is an acyclic complex of sheaves. It suffices to check this on stalks. For $x \in S_0$ the induced sequence on stalks reads
	$$ \F_x \to \F_x \to 0 \to 0 \to \ldots, $$
	with the map $\F_x \to \F_x$ the identity. Thus we may restrict attention to $x$ in some stratum $S_\beta$, $\beta \neq 0$. In this case $(j_{0!}^{}j^{-1}_0\F)_x$ will of course vanish, and we will have
	$$L^d(\F)_x \cong \bigoplus_{\substack{0 = \alpha_0 < \alpha_1 < \ldots < \alpha_d \in P\\\alpha_d \leq \beta}} \F_x $$
	with the differential on $L^\bullet(\F)_x$ given by adding an element to the sequence of $\alpha_i$'s. But this means that $L^\bullet(\F)_x$ is (up to a degree shift) the tensor product of $\F_x$ with the complex $\widetilde C^\bullet(\N (0,\beta])$, which is acyclic because the poset $(0,\beta]$ has a unique maximal element.
	\end{proof}

\begin{rem}
	Another way to think about the complex $L^\bullet(\F)$ is that $j_{0!}^{}j^{-1}_0\F[-1]$ can be calculated as the cone of $\F \to i_\ast i^\ast \F$, where $i \colon (X \setminus S_0) \to X$ is the inclusion. Now $i_\ast i^\ast \F$ may be calculated as the homotopy limit of the various $(i_{\alpha})_\ast (i_{\alpha})^\ast  \F$ (for $\alpha \neq 0$), and $L^{\geq 1}(\F)$ is the bar resolution computing this homotopy limit. 
\end{rem}
%


Suppose we are given $\sigma \colon P \to \Z$ an increasing function. We may now define a decreasing filtration of $L^\bullet(\F)$ by taking 
$$ F^p L^d(\F) = \bigoplus_{\substack{0 = \alpha_0 < \alpha_1 < \ldots < \alpha_d \in P \\ \sigma({\alpha_d}) \geq p}}  (i_{\alpha_d})_\ast (i_{\alpha_d})^\ast  \F.$$
The compactly supported hypercohomology spectral sequence associated to this filtration reads
$$ E_1^{pq} = \mathbb H^{p+q}_c(X,\mathrm{Gr}_F^{p} L^\bullet(\F)) \implies \mathbb H^{p+q}_c(X,L^\bullet(\F)) = H^{p+q}_c(S_0,j_0^{-1}\F)$$
where the second equality is the preceding proposition. Thus we should understand the associated graded $\mathrm{Gr}_F^{p} L^\bullet(\F)$. By  arguments just like those in the proof of Proposition \ref{posetseq}, the associated graded can be written as
$$\mathrm{Gr}_F^{p} L^d(\F) = \bigoplus_{\substack{\beta \in P\\ \sigma(\beta)=p}} \bigoplus_{{0 = \alpha_0 < \alpha_1 < \ldots < \alpha_d = \beta}} (i_\beta)_\ast (i_\beta)^\ast \F, $$
and hence
$$ \mathrm{Gr}_F^{p} L^\bullet(\F) = \bigoplus_{\substack{\beta \in P\\ \sigma(\beta)={p}}} \widetilde C^{\bullet+2}(0,\beta) \otimes (i_\beta)_\ast (i_\beta)^\ast \F = \bigoplus_{\substack{\beta \in P\\ \sigma(\beta)={p}}} \widetilde H^{\bullet+2}(0,\beta) \otimes^L (i_\beta)_\ast (i_\beta)^\ast \F.$$
The last equality uses that $\widetilde C^{\bullet+2}(0,\beta)$ is a complex of free modules, so it calculates the derived tensor product, and that any complex of abelian groups is quasi-isomorphic to its cohomology. 

In full generality, this can not be simplified further. However, in most cases occuring in practice we can:
\begin{enumerate}
	\item If $\F = R$ is a constant sheaf associated to the ring $R$, then $\widetilde H^{\bullet+2}(0,\beta) \otimes^L (i_\beta)_\ast (i_\beta)^\ast \F$ is the constant sheaf $\widetilde H^{\bullet+2}(0,\beta;R)$. 
	\item If the cohomology groups $\widetilde H^{i}(0,\beta)$ are torsion free, or if $\F$ is a sheaf of $k$-vector spaces for some field $k$, then we may replace the derived tensor product with the usual tensor product. 
\end{enumerate}
Let us state our main result only in these two simpler situations.

\begin{thm}\label{mainthm}Let $X = \bigcup_{\beta \in P} S_\beta$ be a stratified space, where the set $P$ is partially ordered by reverse inclusion of the closures of strata. Choose a function $\sigma : P \to \Z$ such that $\sigma(x)<\sigma(y)$ if $x<y$. 
	
	\begin{enumerate}[(i)]
		\item 
		For any ring $R$, there is a spectral sequence 
		$$ E_1^{pq} = \bigoplus_{\substack{\beta \in P\\ \sigma(\beta)={p}}} \bigoplus_{i+j=p+q-2}  H^j_c(\overline S_\beta,\widetilde H^{i}(0,\beta;R)) \implies H^{p+q}_c(S_0,R). $$ 
		\item If $\F$ is a sheaf on $X$, and we assume either that $\F$ is a sheaf of $k$-vector spaces or that each interval $(0,\beta)$ in $P$ has torsion free cohomology, then the there is a spectral sequence 
		$$ E_1^{pq} = \bigoplus_{\substack{\beta \in P\\ \sigma(\beta)={p}}} \bigoplus_{i+j=p+q-2}  \widetilde H^{i}(0,\beta) \otimes H^j_c(\overline S_\beta, i_\beta^\ast \F) \implies H^{p+q}_c(S_0,j_0^{-1}\F). $$
		\end{enumerate}
	If in (ii) $X$ is an algebraic variety and $\F$ is an $\ell$-adic sheaf, then this spectral sequence is a spectral sequence of Galois representations, if the cohomology groups $\widetilde H^{i}(0,\beta)$ are given the trivial Galois action.
\end{thm}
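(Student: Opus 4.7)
The plan is to read off the spectral sequence directly from the filtered complex that has already been constructed in the discussion preceding the theorem, and to simplify the $E_1$ page under the two sets of hypotheses in (i) and (ii).

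First, I would invoke the compactly supported hypercohomology spectral sequence of the filtered complex $(L^\bullet(\F), F^\bullet L^\bullet(\F))$ on $X$. Its abutment is $\mathbb H^{p+q}_c(X, L^\bullet(\F))$, which by the earlier proposition identifying $L^\bullet(\F)$ with $j_{0!}^{} j_0^{-1}\F$ equals $H^{p+q}_c(S_0, j_0^{-1}\F)$. Its $E_1$-page is $\mathbb H^{p+q}_c(X, \mathrm{Gr}_F^p L^\bullet(\F))$, and the displayed formula just above the theorem identifies this associated graded, for each $p$, as a direct sum over strata $\beta$ with $\sigma(\beta)=p$ of $\widetilde H^{\bullet+2}(0,\beta)\otimes^L (i_\beta)_*(i_\beta)^*\F$.

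Next, I would simplify this expression under each of the hypotheses. In both cases the derived tensor product degenerates to an ordinary tensor product: under (ii) this is either because $\F$ is a sheaf of vector spaces over a field, or because we have assumed $\widetilde H^\bullet(0,\beta)$ to be torsion free, and under (i) it is because the derived tensor product of the constant sheaf $R$ with $\widetilde H^\bullet(0,\beta)$ is computed by $\widetilde H^\bullet(0,\beta; R)$ (the reduced cohomology of the interval with coefficients in $R$), which is again a constant sheaf. After writing the associated graded as a direct sum of sheaves of the form $V \otimes (i_\beta)_* (i_\beta)^* \F$ for graded abelian groups $V$, I would pull the finite-dimensional factor $V$ out of the hypercohomology and use the fact that $i_\beta \colon \overline S_\beta \hookrightarrow X$ is a closed inclusion, so that $(i_\beta)_* = (i_\beta)_!$ and hence $\mathbb H^j_c(X, (i_\beta)_* (i_\beta)^* \F) = H^j_c(\overline S_\beta, i_\beta^* \F)$. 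Collecting terms with $i+j = p+q-2$ yields exactly the $E_1$-page stated in the theorem.

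Finally, for the Galois-equivariance statement in the algebraic case, I would observe that the entire construction --- the complex $L^\bullet(\F)$, the filtration $F^\bullet$, and the quasi-isomorphism to $j_{0!}^{}j_0^{-1}\F$ --- is built functorially out of the stratification and of $\F$, hence is automatically Galois-equivariant; the groups $\widetilde H^i(0,\beta)$ appear as purely combinatorial objects (cohomology of the nerve of a subposet) tensored into the picture, and so they carry the trivial Galois action.

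No step here is really an obstacle: the theorem is essentially the formal consequence of assembling the hypercohomology spectral sequence of the filtered complex $L^\bullet(\F)$ and rewriting the associated graded, both of which have been carried out just above the statement. The only point requiring care is the bookkeeping around the degree shift $\widetilde H^{\bullet+2}$ on the poset-cohomology side, so that the indices $(i,j,p,q)$ match; this is where I would be most careful, so that the final indexing agrees with that of Theorem \ref{thmA}.
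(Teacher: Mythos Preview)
Your proposal is correct and follows exactly the approach of the paper: the theorem is stated immediately after the construction of the filtered complex $L^\bullet(\F)$, the identification of its associated graded, and the two simplifying observations, and the proof is nothing more than assembling these ingredients into the hypercohomology spectral sequence. Your treatment of the bookkeeping (pulling the poset-cohomology factor out of hypercohomology, using $(i_\beta)_\ast=(i_\beta)_!$ for closed inclusions, and the Galois-equivariance remark) matches the paper's reasoning.
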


\subsection{Examples and applications}

\begin{ex}
	If the stratification consists only of a closed subspace $i : Z \hookrightarrow X$, then the complex $L^\bullet(\F)$ reduces to the two-term complex $\F \to i_\ast i^\ast \F$, and the spectral sequence reduces to the long exact sequence
	\[ \ldots \to H^k_c(X,\F) \to H^k_c(Z,\F) \to H^{k+1}_c(X \setminus Z,\F) \to H^{k+1}_c(X,\F) \to \ldots \qedhere \]
\end{ex}

\begin{ex}
	Let $X$ be a complex manifold, and $D= D_1 \cup \ldots \cup D_k$ a strict normal crossing divisor. Consider the stratification of $X$ by the various intersections of the components of $D$. For $I \subset \{1,\ldots,k\}$, let $D_I = \bigcap_{i\in I} D_i$, including $D_\emptyset = X$.  Each interval in the poset of strata is a boolean lattice, so its reduced cohomology vanishes below the top degree, where it is one-dimensional. The spectral sequence therefore reduces to 
	$$ E_1^{pq} = \bigoplus_{\vert I \vert = p} H^q_c(D_I,\Z) \implies H^{p+q}_c(X \setminus D,\Z). $$
	This is the Poincar\'e dual of the spectral sequence used by Deligne to construct the mixed Hodge structure on a smooth noncompact complex algebraic variety \cite{hodge2}. In the algebraic case, the above is a spectral sequence of mixed Hodge structures/Galois representations. 
\end{ex}

\begin{ex}
	Suppose $X = \AA^n$ is affine space over a field, and the stratification consists of all the intersections in some subspace arrangement. Let $\sigma(\alpha) = -\dim(S_\alpha)$. Let $\F=\Q_\ell$. In this case the spectral sequence simplifies to 
	$$ E_1^{pq} = \bigoplus_{\substack{\beta \in P\\ \sigma(\beta)={p}}} \widetilde H^{q+3p-2}(0,\beta) \otimes \Q_\ell(p) \implies H^{p+q}_c(S_0,\Q_\ell).$$ 
	Since all columns have different weight there can be no differentials in the spectral sequence. It follows that (up to semisimplification of the Galois representation)
	$$ H^n_c(S_0,\Q_\ell) = \bigoplus_{j=0}^n \Q_\ell(-j) \otimes \left( \bigoplus_{\beta : \dim(S_\beta)=j} \widetilde H^{n-2j-2}(0,\beta) \right)$$
	which re-proves a result of Bj\"orner and Ekedahl \cite{bjornerekedahlsubspace}. 
\end{ex}

\begin{ex}The aforementioned result of Bj\"orner and Ekedahl is the algebro-geometric analogue of a theorem of Goresky--MacPherson \cite{goreskymacphersonstratifiedmorsetheory} about real subspace arrangements; the latter result, too, can be given an easy proof using our spectral sequence. Goresky and MacPherson originally proved it as an application of their stratified Morse theory; many different authors have subsequently given alternative proofs and/or strengthenings. Their result, in turn, is a refinement of the work of Orlik--Solomon on complex hyperplane arrangements \cite{orliksolomon}. In any case, suppose that $X$ is a real vector space, stratified according to intersections in a real subspace arrangement. Let $\sigma(\alpha) = -\dim_\R S_\alpha$. The result of Goresky--MacPherson is equivalent to our spectral sequence degenerating at $E_1$. The weight argument used in the case of a complex subspace arrangement is of course not valid in this setting. We can instead argue as follows: 
	
	Choose for each $\alpha$ an open ball $U_\alpha$ inside $S_\alpha$. Then $C_c^\bullet(U_\alpha,\Z)$ (compactly supported cochains) is a subcomplex of $C_c^\bullet(\overline S_\alpha,\Z)$ for all $\alpha$. The inclusion of each of these subcomplexes is a quasi-isomorphism, and the restriction maps between these subcomplexes are identically zero (since $U_\alpha$ is disjoint from all $S_\beta$ with $\beta > \alpha$.) By additionally choosing an arbitrary quasi-isomorphism between $C_c^\bullet(U_\alpha,\Z)$ and $H_c^\bullet(\overline S_\alpha,\Z)$ we thus get a quasi-isomorphism between the two functors $P \to \mathrm{Ch}_k$ (where the poset $P$ is thought of as a category) given by $\alpha \mapsto S^\bullet_c(\overline S_\alpha,\Z)$ and $\alpha \mapsto H^\bullet_c(\overline S_\alpha,\Z)$. 
	
	We can compute $R\Gamma_c(X,L^\bullet(\Z))$ by means of a double complex, with each vertical row a direct sum of complexes $S_c^\bullet(\overline S_\alpha,\Z)$, and the differentials in the horizontal row given by the differentials in the complex $L^\bullet(\Z)$; equivalently, given by the functor $\alpha \mapsto S^\bullet_c(\overline S_\alpha,\Z)$. If we apply the quasi-isomorphism of functors constructed in the previous paragraph we can replace this double complex with one in which the vertical rows have zero differential, and the horizontal rows are direct sums of complexes of the form 
	$\widetilde C^{\bullet+2}(0,\beta)$.
	
	Our spectral sequence arises from a filtration of this double complex. In this case, the filtration clearly splits, and the spectral sequence degenerates immediately. 
\end{ex}

\begin{ex}\label{variants}
	Let us give two variations of our spectral sequence.
	
	(${1^\circ}$) Let $\D$ denote Verdier's duality functor. The filtration on $L^\bullet(\F)$ induces a filtration on $\D L^\bullet(\F)$, satisfying $\mathrm{Gr}_F \D L^\bullet(\F) \simeq \D \mathrm{Gr}_F L^\bullet(\F)$ (see e.g.\ \cite[(2.2.8.1)]{illusiecotangent}). Thus the associated graded pieces of $\D L^\bullet(\F)$ are quasi-isomorphic to 
	$$\D\left(\bigoplus_{\sigma(\beta)=p}\widetilde H^{\bullet+2}(0,\beta) \otimes^L (i_\beta)_\ast (i_\beta)^\ast\F\right) = \bigoplus_{\sigma(\beta)=p}\widetilde H_{-\bullet-2}(0,\beta) \otimes^L (i_\beta)_! (i_\beta)^! \D \F.$$ Since the cohomology of $\D R$ in negative degrees equals Borel--Moore homology with coefficients in $R$, our filtration of $\D L^\bullet(\F)$ gives rise to a Borel--Moore homology spectral sequence
	$$E^1_{pq} = \bigoplus_{\substack{\beta \in P\\ \sigma(\beta)={p}}} \bigoplus_{i+j=p+q-2}  H_j^{BM}(\overline S_\beta,\widetilde H_{i}(0,\beta;R)) \implies H_{p+q}^{BM}(S_0,R).$$
	($2^\circ$) Instead of taking the compact support cohomology of $L^\bullet(\F) \simeq j_{0!}^{}j_0^{-1}\F$, we may take the usual cohomology. Since $j_{0!}^{}j_0^{-1}\F[1]$ is the cone of $\F \to i_\ast i^\ast \F$, where $i$ is the inclusion $(X \setminus S_0) \hookrightarrow X$, this gives instead a spectral sequence
	\[ E_1^{pq} = \bigoplus_{\substack{\beta \in P\\ \sigma(\beta)={p}}} \bigoplus_{i+j=p+q-2}  H^j(\overline S_\beta,\widetilde H^{i}(0,\beta;R)) \implies H^{p+q}(X,X\setminus S_0;R). \qedhere\]

\end{ex}

\begin{ex}
	Let $X = \{X(n)\}$ be a topological operad. Suppose that $X(n)$ is stratified in such a way that the strata correspond to trees with $n$ legs, the closed stratum correspond to a tree $T$ is $\prod_{v \in \mathrm{Vert}(T)} X(n_v)$, and the composition maps in the operad $X$ are given tautologically by grafting of trees. Let $Y(n)$ be the open stratum in $X(n)$ corresponding to the unique tree with a single vertex. 	Examples of such operads abound: the Stasheff associahedra, the Fulton--MacPherson model of the $\mathbf e_n$-operads, the Deligne--Mumford spaces $\MM_{0,n}$, the Boardman--Vogt $W$-construction applied to an arbitrary topological operad, Devadoss's mosaic operad, the cactus operad, Brown's dihedral moduli spaces $\mathcal M_{0,n}^\delta$, the brick operad $\mathcal B(n)$ of Dotsenko--Shadrin--Vallette...
	
	Clearly, the compact support cohomology $H^\bullet_c(X(n),\Q)$ will form a cooperad. Moreover, the degree-shifted cohomologies $H^{\bullet-1}_c(Y(n),\Q)$ will form an operad: $Y(n) \times Y(m)$ will be a stratum adjacent to $Y(n+m-1)$ inside $X(n+m-1)$, and there is a connecting homomorphism $H^\bullet_c(Y(n) \times Y(m),\Q) \to H^{\bullet+1}_c(Y(n+m-1),\Q)$ coming from the long exact sequence of a pair in compact support cohomology.
	
	If $\sigma$ is the function taking a stratum to the number of vertices in the corresponding tree, then we get a filtration of $X(n)$. The corresponding spectral sequence in compact support cohomology (Eq. \eqref{filtrationsseq} from the introduction) is a cooperad in the category of spectral sequences. Its $E_\infty$ page is the associated graded for a filtration on $H^\bullet_c(X(n),\Q)$, and its $E_1$ page is exactly the \emph{bar construction} on the operad $H^{\bullet-1}_c(Y(n),\Q)$. This construction seems to have first been considered in \cite[Subsection 3.3]{getzlerjones}, where it was used to prove Koszul self-duality of the $\mathbf e_n$-operads (equivalently, collapse of the spectral sequence), using the Fulton--MacPherson compactification. 
	
	Our Theorem \ref{thmA} then gives a dual spectral sequence. All intervals in the poset of trees are boolean lattices, and the spectral sequence of Theorem \ref{thmA} takes the simple form
	$$E_1^{pq}=\bigoplus_{\#\mathrm{Vert}(T)=p} H^{q}_c\left(\prod_{v \in \mathrm{Vert}(T)} X(n_v),\Q\right) \implies H^{p+q-1}_c(Y(n),\Q). $$
	This is now an \emph{operad} in the category of spectral sequences, whose $E_\infty$ page is the associated graded for a filtration on $H^{\bullet-1}_c(Y(n),\Q)$, and whose $E_1$ page is exactly the \emph{cobar construction} on the operad $H^\bullet_c(X(n),\Q)$. 
	
	Thus we see that working with compact support cohomology gives a quite general setting for proving bar/cobar-duality results for such pairs of operads $X$, $Y$.
\end{ex}

\begin{ex}\label{cm}
	Suppose that the poset $P$ is Cohen--Macaulay, or more generally that $P$ is graded with rank function $\rho$ and that $ \widetilde H_i(0,\beta) = 0$ for $i<\rho(\beta)-2 = \dim \N(0,\beta)$. Then if we apply the spectral sequence for the function $\rho$, the spectral sequence simplifies to 
	$$E_1^{pq} = \bigoplus_{\substack{\beta \in P\\ \rho(\beta)={p}}}  \widetilde H^{p-2}(0,\beta) \otimes  H^{q}_c(\overline S_\beta, i_\beta^\ast \F) \implies H^{p+q}_c(S_0,j_0^{-1}\F).$$
	In fact, something stronger is true: the chain complex $L^\bullet(\F)$ is filtered quasi-isomorphic to a complex of sheaves $K^\bullet(\F)$, with
	$$ K^d(\F) = \bigoplus_{\substack{\beta \in P\\ \rho(\beta)={d}}} \widetilde H^{d-2}(0,\beta)\otimes (i_\beta)_\ast (i_\beta)^\ast \F,$$
	and which is filtered by the ``stupid filtration''. The Cohen--Macaulay condition is extremely well studied and is known for large classes of posets. See e.g.\ \cite[Section 4]{wachsposettopology}.
	\end{ex}
	
	\begin{ex}
		Suppose that $X$ is a complex manifold, and that we are given an ``arrangement-like'' divisor $D$ on $X$, i.e.\ $D$ can locally be defined by a product of linear forms. Then the poset of strata is a geometric lattice and therefore Cohen--Macaulay. The complex of sheaves $K^\bullet(\F)$ is the Verdier dual of the one constructed by Looijenga in \cite[Section 2]{looijengaM3}. As part of his construction, he needs to inductively choose a certain free $\Z$-module $E_S$ for each stratum $S$ --- the fact that such a choice is possible is not obvious, and requires the Cohen--Macaulay condition!	\end{ex}
	
\begin{ex}\label{crit}
Suppose that $P$ is Cohen--Macaulay and the two contravariant functors $P \to \mathrm{Ch}_k$ given by $\alpha \mapsto R\Gamma_c(\overline S_\alpha,i_\alpha^\ast\F)$ and $\alpha \mapsto H^\bullet_c(\overline S_\alpha,i_\alpha^\ast \F)$ are quasi-isomorphic. Then the spectral sequence degenerates at $E_2$. 
	
	Indeed, we can realize $R\Gamma_c(X,K^\bullet(\F))$ as a double complex, with each column a direct sum of complexes $R\Gamma_c(\overline S_\alpha,i_\alpha^\ast\F)$, and the differentials in each row given by the differentials in the complex $K^\bullet(\F)$. If we have such a quasi-isomorphism we can therefore replace this double complex with one in which all vertical differentials vanish. Our spectral sequence is the spectral sequence given by filtering this double complex column-wise, since $K^\bullet(\F)$ has the stupid filtration. Thus it will indeed be the case that the spectral sequence has nontrivial differential only on $E_1$. 
	
	Suppose that each closed stratum $\overline S_\alpha$ is a compact complex manifold on which the $dd^c$-lemma holds, e.g. a K\"ahler or Moishezon manifold, and that the sheaf $\F$ is the constant sheaf $\R$. Then the criterion stated in the first sentence of this example is satisfied. Indeed, we may take as our model for $R\Gamma_c(\overline S_\alpha,\R) = R\Gamma(\overline S_\alpha,\R) $ the real de Rham complex of $\overline S_\alpha$, and then the validity of the above criterion is a particular case of \cite[Section 6, Main Theorem (ii)]{dgms}.
\end{ex}

\begin{ex}
	Suppose that $P$ is Cohen--Macaulay, and that each closed stratum is an algebraic variety whose compact support cohomology is of pure weight in each degree (e.g.\ a smooth projective variety). Then the spectral sequence also degenerates at $E_2$, using instead a weight argument. 
\end{ex}

\begin{ex}\label{confex}
	For a space $M$, let $F(M,n)$ denote the configuration space of $n$ distinct ordered points on $M$. If $M$ is an oriented manifold, a spectral sequence calculating the cohomology of $F(M,n)$ was constructed by Cohen and Taylor \cite{cohentaylor}. Their construction was later simplified by Totaro, who noticed that the spectral sequence is just the Leray spectral sequence for the inclusion $j:F(M,n) \hookrightarrow M^n$ \cite{totaro}. Getzler \cite{getzler99} then realized that the spectral sequence exists for a more or less arbitrary topological space, if one works with \emph{compactly supported} cohomology instead: more precisely, Getzler constructed a complex of sheaves quasi-isomorphic to $j_!j^{-1}\F$, whose compactly supported hypercohomology spectral sequence was Poincar\'e dual to the spectral sequence of Cohen--Taylor in the case of an oriented manifold.
	
	So suppose that $X=M^n$ for some space $M$, and let us stratify $X$ according to points coinciding. Then the poset of strata is the partition lattice $\Pi_n$, which is Cohen--Macaulay. Our complex $K^\bullet(\F)$ is exactly the one considered by Getzler, and the resulting spectral sequence
	$$E_1^{pq} = \bigoplus_{\substack{\beta \in \Pi_n\\ \rho(\beta)={p}}} \widetilde H^{p-2}(0,\beta) \otimes H^{q}_c(M^{n-p},\Z) \implies H^{p+q}_c(F(M,n),\Z)$$ 
	is the Poincar\'e dual of Cohen--Taylor's if $M$ is an oriented manifold. To see the identification of $K^\bullet(\F)$ with Getzler's resolution we need to know the cohomology of the partition lattice.
	
	Note first of all that each lower interval $[0,\beta]$ in the partition lattice is itself a product of partition lattices: e.g.\ if $\beta$ corresponds to the partition $(136\vert 27 \vert 45)$, then $[0,\beta] \cong \Pi_3 \times \Pi_2 \times \Pi_2$. Thus by the K\"unneth theorem we only need to know the top cohomology group $\widetilde H^{n-3}(\Pi_n,\Z)$. This calculation is hard to attribute correctly --- it follows by combining the results of \cite{klyachko} and \cite{stanleygroupactionsonposets}, see also \cite[Section 4]{joyalanalyticfunctors}. The result is in any case that $\widetilde H^{n-3}(\Pi_n,\Z)$ has rank $(n-1)!$ and that as a representation of the symmetric group $\sym_n$, it is isomorphic to $\mathsf{Lie}(n) \otimes \sgn_n$, where $\mathsf{Lie}(n)$ is the arity $n$ component of the Lie operad. But the same is also true for the cohomology group $H^{n-1}(F(\C,n),\Z)$, by the results of Cohen \cite{cohenladamay}; specifically, since the homology of the little disk operad is the Gerstenhaber operad, and the Gerstenhaber operad in top degree is just a suspension of the Lie operad, we get the above identification. This explains why the cohomology groups of $F(\C,n)$ appear in Getzler's construction of the resolution: the decomposition of $H^\bullet(F(\C,n))$ into summands corresponding to different partitions of $\{1,\ldots,n\}$ used by Getzler corresponds to
	\[ H^k(F(\C,n),\Z) \cong \bigoplus_{\substack{\beta \in \Pi_n\\ \rho(\beta)={k}}} \widetilde H^{k-2}(0,\beta).  \] 
	If we instead consider the second ``variant'' spectral sequence described in Example \ref{variants}, applied to the stratification of $X=M^n$ according to points coinciding, then we recover the spectral sequence of Bendersky--Gitler \cite{benderskygitler}.  
	\end{ex}

\begin{ex}
	Consider the example $X=M^n$ of a configuration space. Let $A_c^\bullet(M)$ be a cdga model for the compactly supported cochains on $M$. Then the criterion described in Example \ref{crit} is equivalent to $A_c^\bullet(M)$ being formal, i.e.\ that $A_c^\bullet(M)$ and $H^\bullet_c(M)$ are quasi-isomorphic. Hence if $A_c^\bullet(M)$ is formal then the Cohen--Taylor--Totaro spectral sequence degenerates after the first differential. 
	
	If in the same situation we consider the second variant of Example \ref{variants} (i.e. the Bendersky--Gitler spectral sequence), then we see by the same argument that the spectral sequence degenerates after the first differential whenever $M$ is a formal space, a result which is also proven in Bendersky--Gitler's original paper.\end{ex}

\subsection{Compatibility with Hodge theory}

We have already mentioned several times that in the algebraic case, we obtain a spectral sequence of $\ell$-adic Galois representations. It is natural to ask whether in the complex algebraic setting we get a spectral sequence of mixed Hodge structures. 

The answer is: yes, and it follows from Saito's theory of mixed Hodge modules \cite{saitomixedhodge}. Yet some care must be taken here. Saito proves the existence of a six functors formalism on the level of the derived categories $D^b(\MHM(X))$, where $X$ is a complex algebraic variety. We defined the complex $L^\bullet(\F)$ in such a way that $L^d(\F)$ is a sum of objects of the form $(i_{\alpha_d})_\ast (i_{\alpha_d})^\ast  \F$. Now if $\F$ is a mixed Hodge module then $(i_{\alpha_d})_\ast (i_{\alpha_d})^\ast  \F$ is in general only going to be an object of $D^b(\MHM(X))$, and this is not good enough: a ``chain complex'' of objects in a triangulated category $ T$ can in general have several non-isomorphic totalizations to an object of $ T$, or none at all. 

Thus the construction can only be carried out if $i_\ast i^\ast$ is a $t$-exact functor, for $i$ a closed immersion. This is of course true for the usual $t$-structure of constructible sheaves, but it is \emph{false} for the perverse $t$-structure on $D^b_c(X)$: $i_\ast$ is still $t$-exact, but $i^\ast$ clearly is not. Since a mixed Hodge module does not have an underlying constructible sheaf but instead an underlying perverse sheaf, $i^\ast$ is not $t$-exact for mixed Hodge modules. 

However, one can choose instead to give $D^b(\MHM(X))$ a constructible (i.e.\ non-perverse) $t$-structure, which is uniquely characterized by the functor $\mathrm{rat} \colon D^b(\MHM(X)) \to D^b_c(X)$ being $t$-exact for the constructible $t$-structure on $D^b_c(X)$ \cite[Remark 4.6]{saitomixedhodge}. In other words, an object $\F \in D^b(\MHM(X))$ is in the heart of the constructible $t$-structure if and only $\mathrm{rat}(\F)$ is quasi-isomorphic to a constructible sheaf. In particular, $i^\ast$ will be $t$-exact for this $t$-structure, and $i_\ast$ will be $t$-exact whenever $i$ is a closed immersion.

 Let $\H(X)$ be the heart of the constructible $t$-structure of $D^b(\MHM(X))$, and let $\F$ be an object of $\H(X)$. Then for $d \geq 0$ we obtain an object 
$$ L^d(\F) = \bigoplus_{0 = \alpha_0 < \alpha_1 < \ldots < \alpha_d \in P}  (i_{\alpha_d})_\ast (i_{\alpha_d})^\ast  \F$$
of $\H(X)$, and we can define a differential $ L^d(\F) \rightarrow L^{d+1}(\F)$ just as before. Thus we get an object $L^\bullet(\F)$ of $D^b( \H(X))$, quasi-isomorphic to $j_{0!}^{}j^{-1}_0 \F \in \H(X)$. Moreover, we obtain a filtration of $L^\bullet(\F)$ by the same procedure as before. 
%
This filtration allows us to write down a \emph{Postnikov system} in the triangulated category $D^b(\H(X))$, with totalization $L^\bullet(\F)$ \cite[p. 262]{methodsofhomologicalalgebra}:
%
\begin{equation*}
\raisebox{-0.5\height}{\includegraphics{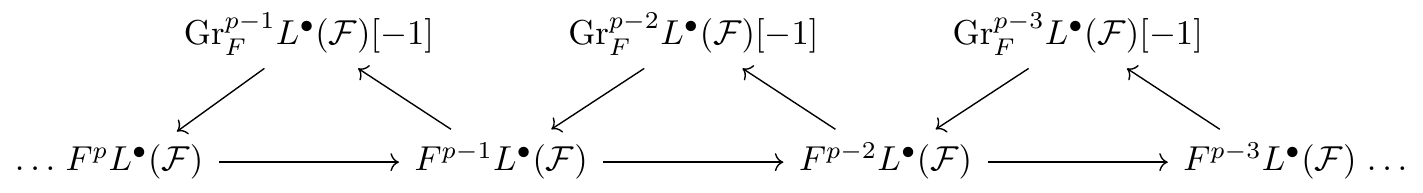}}
\end{equation*}



Let $\mathsf T$ be a triangulated category with a $t$-structure, and $\mathsf T^\heartsuit$ its heart. A \emph{realization functor} is an exact functor $D^b(\mathsf T^\heartsuit) \to \mathsf T$ whose restriction to the full subcategory $\mathsf T^\heartsuit$ is the inclusion into $\mathsf T$. If $\mathsf T$ is itself the derived category of an abelian category (not necessarily with its standard $t$-structure), then a realization functor always exists \cite[Section 3]{bbd}; more generally, a realization functor always exists if $\mathsf T$ is the homotopy category of a stable $\infty$-category. In particular we obtain a realization functor
$$ \mathsf{real} : D^b(\H(X)) \to D^b(\MHM(X)). $$
Exact functors preserve Postnikov system, and we get a Postnikov system in $D^b(\MHM(X))$ whose terms are of the form $\mathsf{real}(\gr_F^{p} L^\bullet(\F))$, up to a degree shift. Now we note that the functor $\mathsf{real}$ commutes with tensoring with a bounded complex of free abelian groups. As such the terms in the Postnikov system in $D^b(\MHM(X))$ are given by 
\begin{align*}
\mathsf{real}(\gr_F^{p} L^\bullet(\F)) &\cong \mathsf{real} \bigoplus_{\substack{\beta \in P\\ \sigma(\beta)={p}}} \widetilde C^{\bullet+2}(0,\beta) \otimes (i_\beta)_\ast (i_\beta)^\ast \F \\
 &\cong  \bigoplus_{\substack{\beta \in P\\ \sigma(\beta)={p}}} \widetilde C^{\bullet+2}(0,\beta) \otimes  \mathsf{real} (i_\beta)_\ast (i_\beta)^\ast \F \\
 &\cong  \bigoplus_{\substack{\beta \in P\\ \sigma(\beta)={p}}} \widetilde C^{\bullet+2}(0,\beta) \otimes (i_\beta)_\ast (i_\beta)^\ast \F,  
\end{align*} 
using in the last step that $(i_\beta)_\ast (i_\beta)^\ast \F$ is in $\H(X)$. Applying $Rf_!$ to this Postnikov system, where $f \colon X \to \mathrm{Spec}(\C)$ is the projection to a point, gives a Postnikov system in the derived category of mixed Hodge structures (the category of mixed Hodge modules over a point). The associated spectral sequence \cite[p. 263]{methodsofhomologicalalgebra} is the one of Theorem \ref{mainthm}(ii), now equipped with the canonical mixed Hodge structure coming from the fact that $\F$ is a mixed Hodge module.

\begin{rem}It seems likely that $D^b(\H(X)) \to D^b(\MHM(X))$ is an equivalence of categories, which would be the analogue for mixed Hodge modules of Be{\u\i}linson's theorem that the realization functor from the derived category of perverse sheaves to the derived category of constructible sheaves is an equivalence \cite{beilinsonperverse}, but I do not know if this is known and I have not attempted to prove it.
\end{rem}		
\section{Representation stability}
The notion of \emph{representation stability} was introduced by Church and Farb \cite{churchfarbrepresentationstability} as an extension of homological stability to situations where the Betti numbers do not actually stabilize. Roughly, a sequence $\{V(n)\}$ of representations of $\sym_n$ over $\Q$ is said to be \emph{representation stable} if, for $n \gg 0$, the representation $V(n+1)$ is obtained from the representation $V(n)$ by adding a single box to the top row of the Young diagram of each irreducible representation occuring in $V(n)$. Thus $V(n+1)$ is completely determined from $V(n)$ for sufficiently large $n$. Note in particular that the $\sym_n$-invariants satisfy $V(n+1)^{\sym_{n+1}} \cong V(n)^{\sym_n}$ for $n \gg 0$; if $\{V(n)\}$ were a sequence of homology groups, the $\sym_n$-invariants would satisfy homological stability in the usual sense.

The theory was clarified by the introduction of \emph{FI-modules} \cite{churchellenbergfarb}. The key point is that the underlying sequence of $\sym_n$-representations of an FI-module in $\Q$-vector spaces is representation stable if and only if the FI-module is \emph{finitely generated}. Most examples of representation stability arise from an FI-module in this way. 

One of the main examples of representation stability is given by the following theorem of Church \cite{church}: if $M$ is an oriented manifold, then $H^i(F(M,n),\Q)$ is a representation stable sequence of $\sym_n$-representations for any $i$. In this example, it was known since \cite{mcduff} that the cohomology of the unordered configuration space $F(M,n)/\sym_n$ satisfies homological stability for \emph{integer} coefficients if $M$ is an \emph{open} manifold, but also that integral homological stability is false in general. Church's result shows in particular that with $\Q$-coefficients, the unordered configuration space always satisfies homological stability. 

This result of Church fits well into the general framework of FI-modules. The assignment $S\mapsto F(M,S)$ is a contravariant functor from $\FI$ to spaces: if $S \subset T$, then $F(M,T) \to F(M,S)$ is the map that forgets all the points indexed by elements of $T \setminus S$. On applying $H^i(-,\Q)$ one gets an FI-module, which turns out to be finitely generated; in fact, finite generation holds already with integral coefficients \cite{cefn}. 

For the remainder of this paper, we will prove a theorem extending Church's result in several ways:
\begin{enumerate}
	\item Our proof works in a uniform way for a much larger class of configuration-like spaces, such as ``$k$-equals'' configuration spaces, the spaces $\overline w^c_\lambda(M)$ considered by Vakil--Wood, etc.
		\item We give a proof valid also in the algebro-geometric setting, so we get e.g.\ representation stability in the category of $\ell$-adic Galois representations. (This was previously proven for the spaces $F(M,n)$ in \cite{wolfsonfarb} under more restrictive assumptions on $M$.)
	\item We allow $M$ to have singularities. In the paper we focus on the case when $M$ is an algebraic variety (with arbitrary singularities); we comment towards the end on the differences in the topological setting. 
\end{enumerate}
In order to have representation stability in the more general setting of a singular space, one needs to work with Borel--Moore homology/compactly supported cohomology. Note that compactly supported cohomology is only contravariant for \emph{proper} maps, and the map $F(M,T) \to F(M,S)$ is (almost) never proper, so $H^\bullet_c(F(M,S),\Q)$ is not directly an FI-module. This should in any case not be surprising: if we want to recover Church's theorem by Poincar\'e duality when $M$ is an oriented manifold, then we had better prove that the cohomology $H^\bullet_c(F(M,S),\Q)$ satisfies representation stability up to a degree shift by the dimension of $F(M,S)$.

\subsection{Twisted commutative algebras and FI-modules}
Our proof of representation stability uses the formalism of FI-modules and twisted commutative algebras. We briefly recall the definitions for the reader's convenience. 
\newcommand{\CC}{\mathcal C}
Let $\CC$ be a symmetric monoidal category (the reader is encouraged to take $\CC$ to be the category of dg vector spaces over a field of characteristic zero). By a \emph{species in $\CC$} we mean a functor $\B \to \CC$, where $\B$ the category of finite sets and bijections. The category of species is equivalent to the category of sequences of representations of the symmetric groups $\sym_n$ in $\CC$. We write a species as ``$S \mapsto A(S)$'' or ``$n \mapsto A(n)$'', depending on whether we wish to consider is as a functor of finite sets or as a sequence of representations. We call $A(n)$ the \emph{arity $n$} component of the species $A$.

Let us consider $\B$ as a symmetric monoidal category, with monoidal structure given by disjoint union. A \emph{twisted commutative algebra} (tca) in $\CC$ is a lax symmetric monoidal functor $\B \to \CC$. Thus a twisted commutative algebra in $\CC$ consists of a sequence $\{A(n)\}$ of $\sym_n$-representations in $\CC$, equipped with multiplication maps $A(n) \otimes A(m) \to A(n+m)$ which are $\sym_n \times \sym_m$-equivariant and satisfy suitable commutativity and associativity axioms. An equivalent definition is that a tca is a left module over the commutative operad $\Com$ in $\CC$. A third equivalent definition is that a tca is an algebra over the operad $\Com$ in the category of species in $\CC$, where the tensor product on the category of species is given by Day convolution:
$$(A \otimes B)(S) = \bigoplus_{S=T_1 \sqcup T_2} A(T_1)\otimes B(T_2).$$

Suppose that $\CC$ is the category of dg $R$-modules. Let $A$ be a species in $\CC$. We define the \emph{suspension} $\S A$ by
$$ \S A(n) = A(n)[-n] \otimes \mathrm{sgn}_n.$$The suspension is a symmetric monoidal endofunctor on the symmetric monoidal category of species. In partiular, if $A$ is a tca, then so is $\S A$. 

Let $\FI$ denote the category of finite sets and injections. By an \emph{FI-module} in the category $\CC$ we mean a functor $\FI \to \CC$. 

Let $\Com$ be the twisted commutative algebra which is given by the monoidal unit with trivial $\sym_n$-action in each arity, and for which all multiplication maps $\Com(S) \otimes \Com(T) \to \Com (S \sqcup T)$ are given by the canonical isomorphism $\mathbf 1 \otimes \mathbf 1 \cong \mathbf 1$. In other words, we are considering the commutative operad as a left module over itself. There is a general notion of a module over an algebra over any operad, which in this case specializes to an evident notion of a module over a tca. 

\begin{lem}
	Every module over the tca $\Com$ is in a canonical way an FI-module, and vice versa.
\end{lem}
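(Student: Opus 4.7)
The plan is to exhibit mutually inverse constructions converting $\Com$-module structures into $\FI$-module structures on the same underlying species, and then check that the required compatibilities on each side match.

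First I would unpack what a $\Com$-module structure amounts to. Since $\Com(S) = \mathbf 1$ with trivial $\sym_{|S|}$-action, a module structure on a species $M$ consists of morphisms $\mu_{A,B}\colon \Com(A)\otimes M(B) \to M(A\sqcup B)$ which, after identifying $\Com(A)\otimes M(B) \cong M(B)$, are simply morphisms $\iota_{A,B}\colon M(B) \to M(A\sqcup B)$. The tca axioms translate into (i) $\iota_{\emptyset,B}$ is the identity, (ii) $\iota_{A',A\sqcup B}\circ \iota_{A,B} = \iota_{A'\sqcup A,B}$ under the evident reassociation, and (iii) each $\iota_{A,B}$ is $\sym_{|A|}\times\sym_{|B|}$-equivariant with respect to the species structure on $M$.

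Next, from an $\FI$-module $M\colon \FI\to\CC$ I would read off a species by restricting to $\B\subset\FI$, and define $\iota_{A,B}$ to be the image under $M$ of the canonical inclusion $B\hookrightarrow A\sqcup B$. Functoriality of $M$ on $\FI$ gives associativity and unitality for the $\iota_{A,B}$, while the naturality of $M$ with respect to permutations of $A$ and $B$ gives equivariance. Conversely, starting from a $\Com$-module $M$, I would define the action of an injection $f\colon S\hookrightarrow T$ by factoring $f$ as a bijection $\phi\colon S\xrightarrow{\sim} f(S)$ followed by the canonical inclusion $f(S)\hookrightarrow f(S)\sqcup (T\setminus f(S)) = T$, and then setting $M(f) := \iota_{T\setminus f(S),\,f(S)}\circ M(\phi)$, where $M(\phi)$ comes from the species structure. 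The key verification is that this is well-defined and functorial: given a composable pair $S\xrightarrow{f}T\xrightarrow{g}U$, one factors each injection into a bijection plus a canonical inclusion and uses axiom (ii) together with the equivariance (iii) to commute the bijection part past the inclusion part; the required identity then reduces precisely to associativity of the $\Com$-action.

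Finally I would observe that the two constructions are mutually inverse essentially by inspection: a morphism in $\FI$ from $B$ to $A\sqcup B$ given by the tautological inclusion is sent by the first construction to $\iota_{A,B}$ and then back again to itself, and every injection can be canonically expressed via a bijection and such a tautological inclusion, so no information is lost. Morphisms of $\Com$-modules and morphisms of $\FI$-modules likewise correspond, since a natural transformation of species commutes with all the structure maps on one side if and only if it does on the other. The main technical nuisance is just the bookkeeping in Step~2 to check compatibility of the bijection and inclusion parts of the factorization; once equivariance is invoked this is routine.
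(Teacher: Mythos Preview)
Your proposal is correct and takes essentially the same approach as the paper. The paper's proof is explicitly labeled a sketch and only writes out one direction (from $\Com$-module to $\FI$-module, via $M(S)\cong M(S)\otimes\Com(T\setminus S)\to M(T)$), declaring the converse ``similar''; you have simply fleshed out both directions, the axiom translation, and the mutual-inverse check in more detail than the paper bothers with.
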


\begin{proof}
	(Sketch) Let $M$ be a module over the tca $\Com$, and let $S \subset T$. Then we have a map
	$$ M(S) = M(S) \otimes \mathbf 1 = M(S) \otimes \Com(T \setminus S) \to M(S \sqcup (T \setminus S)) = M(T),$$
	where $\mathbf 1$ denotes the monoidal unit in $\CC$ and ``$=$'' denotes a canonical isomorphism. This makes $M$ into an FI-module. The converse construction is similar.
\end{proof}

If $A$ is a tca in $\CC$, then the choice of a morphism $a \colon \mathbf 1 \to A(1)$ is the same as the choice of a tca morphism  $\Com \to A$. Thus the choice of such an $a$ defines the structure of FI-module on the underlying species of the tca $A$. 

In particular, let $\CC$ be the category of graded $R$-modules. For a tca $A$ in $\CC$, let us write $A^i(n)$ for the degree $i$ component of $A(n)$. Then any $a \in A^0(1)$ defines a structure of FI-module on the collection $A^i(n)$, for all $i \in \Z$. We write $\vert a \vert$ for the degree of a homogeneous element in a graded vector space.

\begin{lem}
	Suppose $A$ is a tca in graded $R$-modules, and that $\{a_0,a_1,a_2,\ldots,\}$ is a set of generators. Suppose that $a_0\in A^0(1)$, $|a_i|<0$ for $i>0$, and
	$\lim_{i \to \infty} |a_i| = - \infty$.
	Then the FI-module $n \mapsto A^i(n)$ defined by multiplication with $a_0$ is finitely generated for all $i \in \Z$.
\end{lem}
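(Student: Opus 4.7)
My strategy is to reduce to the free tca on the generators and then count degrees.

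First I would reduce to the free case. Let $F$ denote the free tca on the generators $\{a_0,a_1,a_2,\ldots\}$. Since these elements generate $A$, the induced tca map $F \to A$ is surjective, and since it sends $a_0$ to $a_0$ it is equivariant for the $a_0$-induced FI-module structures on both sides. Hence $F^i \twoheadrightarrow A^i$ is a surjection of FI-modules for every $i \in \Z$, and it suffices to prove finite generation of $F^i$.

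Next I would decompose the free tca. Split the generators into $a_0$ (of degree $0$) and the rest (of strictly negative degree), and correspondingly factor $F \cong \Com \otimes B$ as a Day convolution of tcas, where $\Com$ is the free tca on $a_0$ and $B$ is the free tca on $\{a_1,a_2,\ldots\}$. Under the identification of $\Com$-modules with FI-modules recalled just above the lemma, $F$ is precisely the free FI-module on the underlying species of $B$. Thus it suffices to show that the species $B^i$ is ``finite'' in the sense that $B^i(\ell) = 0$ for all but finitely many $\ell$ and is a finitely generated $R$-module in each such arity; if so, $F^i$ is a finite direct sum of freely induced FI-modules and is therefore finitely generated.

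The heart of the argument is to verify this finiteness of $B^i$. A general element of $B$ is a linear combination of products $a_{j_1}\cdots a_{j_\ell}$ with each $j_s\ge 1$. Since $|a_{j_s}|\le -1$, such a product has degree $\le -\ell$, so any contribution to $B^i$ must have $\ell\le\max(0,-i)$. Moreover, in such a product each individual factor must itself have degree $\ge i$, since the remaining factors contribute a nonpositive amount; by the divergence $|a_j|\to-\infty$, only finitely many generators satisfy this. Letting $N_{\max}$ be the maximum arity among these ``relevant'' generators, any product contributing to $B^i$ has arity at most $|i|\cdot N_{\max}$, and in each such arity $B^i(\ell)$ is spanned by finitely many products, hence finitely generated as an $R$-module.

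The main obstacle is this degree bookkeeping, which uses both hypotheses on $|a_j|$ essentially: the strict negativity caps the number of factors in a relevant product, while the divergence $|a_j|\to-\infty$ caps which generators can appear. The other ingredients --- the surjection from the free tca, the identification of $\Com$-modules with FI-modules, and closure of finitely generated FI-modules under finite sums and quotients --- are formal and already in place in the paper.
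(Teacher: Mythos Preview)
Your proof is correct and follows essentially the same approach as the paper's: both argue that there are only finitely many monomials in $\{a_1,a_2,\ldots\}$ of any fixed degree $i$, and that these generate the FI-module $n \mapsto A^i(n)$. The paper states this in two sentences without further justification, whereas you make the argument precise by passing to the free tca, factoring it as $\Com \otimes B$, and carrying out the degree bookkeeping explicitly; but the underlying idea is identical.
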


\begin{proof}The hypotheses imply that there are only finitely many monomials in $\{a_1,a_2,\ldots,\}$ (i.e.\ all generators except $a_0$) of given degree. Those monomials of degree $i$ generate the FI-module $n \mapsto A^i(n)$.
\end{proof}

Applying the previous lemma to the $d$-fold suspension $\S^d A$ we get the following:

\begin{lem}\label{tcalemma}
	Suppose $A$ is a tca in graded $R$-modules, and that $\{a_0,a_1,a_2,\ldots,\}$ is a set of generators. Suppose that if $a_k \in A^i(n)$ then $i\leq nd$, with equality only for a single element $a_0 \in A^{d}(1)$, and that for each $p \in \Z$, there are only finitely many $k$ for which $a_k \in A^i(n)$ and $i \geq nd-p$. If $d$ is even, then $n \mapsto A^{i+dn}(n)$ becomes an FI-module by multiplication with $a_0$, and if $d$ is odd, then $n \mapsto A^{i+dn}(n) \otimes \mathrm{sgn}_n$ becomes an FI-module in this way. This FI-module is finitely generated for all $i \in \Z$.
\end{lem}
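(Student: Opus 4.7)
The plan is to reduce this lemma to the preceding one by applying it to the suspended tca $B := \S^d A$. Since the suspension $\S$ is a symmetric monoidal endofunctor on the category of species, $B$ is again a tca, and the data of a morphism $\Com \to B$ is the same as the data of a morphism $\Com \to A$ after the degree and sign shift built into $\S$. Concretely, iterating the formula $\S A(n) = A(n)[-n] \otimes \mathrm{sgn}_n$ gives
$$ (\S^d A)^i(n) = A^{i+dn}(n) \otimes \mathrm{sgn}_n^{\otimes d}, $$
so each generator $a_k \in A^i(n)$ corresponds to a generator $b_k \in B^{i-dn}(n)$, and the set $\{a_0, a_1, \ldots\}$ of generators for $A$ transports to a set $\{b_0, b_1, \ldots\}$ of generators for $B$.

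Next I would translate the hypotheses into the form required by the previous lemma. The assumption $i \leq nd$ for every generator, with equality only for $a_0 \in A^d(1)$, becomes $|b_k| \leq 0$ with equality only for $b_0 \in B^0(1)$. The finiteness assumption that for each $p \in \Z$ only finitely many generators $a_k$ satisfy $i \geq nd-p$ translates to the statement that only finitely many $b_k$ satisfy $|b_k| \geq -p$; equivalently $|b_k| \to -\infty$ as $k \to \infty$ (or the set of generators is finite, in which case the same argument applies trivially). These are precisely the hypotheses of the previous lemma for the tca $B$ with distinguished element $b_0$.

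Applying that lemma gives that $n \mapsto B^i(n)$ is a finitely generated FI-module for every $i \in \Z$, with FI-module structure defined by multiplication with $b_0$. Unpacking the suspension via the displayed formula above, this says exactly that $n \mapsto A^{i+dn}(n) \otimes \mathrm{sgn}_n^{\otimes d}$ is a finitely generated FI-module. For $d$ even the sign factor is trivial and we conclude finite generation of $n \mapsto A^{i+dn}(n)$; for $d$ odd we conclude finite generation of $n \mapsto A^{i+dn}(n) \otimes \mathrm{sgn}_n$. Because $\S$ is symmetric monoidal, the multiplication by $b_0$ used to define the FI-module structure on $B$ matches the multiplication by $a_0$ used to define the FI-module structure on $A$ under the identification above.

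The only substantive issue is the bookkeeping of the degree shift and the sign twist when iterating $\S$, and the verification that the generating set is really sent to a generating set. Both are formal consequences of the fact that $\S$ is an equivalence of symmetric monoidal categories, so there is no conceptual obstacle beyond chasing the conventions carefully.
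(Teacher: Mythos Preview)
Your proposal is correct and is exactly the paper's approach: the paper's proof consists of the single sentence ``Applying the previous lemma to the $d$-fold suspension $\S^d A$ we get the following,'' and you have simply spelled out that application in detail.
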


\subsection{The ``$\A$-avoiding'' configuration spaces}

For each finite set $S$, let $M^S$ be the cartesian product of $\vert S\vert$ copies of some space $M$. The functor $S \mapsto M^S$ is a co-FI-space. If $S \hookrightarrow T$, then we denote by $\pi_S^T : M^T \to M^S$ the projection. 

Let $\A$ be a finite collection of closed subspaces $\{A_i \subset M^{S_i}\}_{i=1}^\ell$, where each $S_i$ is some finite set. 
%
%
%
For every finite set $T$, consider the stratification of $M^T$ given by all subspaces $$(\pi_{S_i}^T)^{-1}(A_i) \subset M^T, \qquad i=1,\ldots,\ell$$ ranging over  all inclusions $S_i \hookrightarrow T$, and all intersections of those subspaces. Let $P_\A(T)$ be the poset of strata in this stratification, and let $F_\A(M,T)$ denote the open stratum which is the complement of all of the $(\pi_{S_i}^T)^{-1}(A_i)$.

\begin{ex}
	If $\A$ is a singleton with $A_1 = \Delta \subset M^2$, then $F_\A(M,n)$ is the classical configuration space of $n$ points on $M$. If $\A$ instead consists only of the small diagonal in $M^k$, then $F_\A(M,n)$ is the ``$k$-equals'' configuration space of points on $M$. 
\end{ex}

\begin{ex}
	If a finite group $G$ acts on $M$, then we can let $\A$ consist of all subspaces $\{(x,g\cdot x) : x \in M\}$ inside $M^2$, in which case $F_\A(M,n)$ parametrizes $n$ distinct ordered points all of which are in distinct $G$-orbits. An example of this is the complement of hyperplanes in the Coxeter arrangement associated to the wreath product $(\boldsymbol{\mu}_r)^n \rtimes \sym_n$ acting on $\C^n$.
\end{ex}

\begin{ex}
	Suppose that $M = Y^2$ for some other space $Y$, and $\A$ consists of the collection $\{\Delta_{13},\Delta_{14},\Delta_{23},\Delta_{24}\}$ of diagonals inside $M^2 =Y^4$. Then $F_\A(M,n)$ parametrizes points $x_1,\ldots,x_n$ and $y_1,\ldots,y_n$ in $Y$ such that the $x_i$ (resp. $y_i$) may collide amongst each other, but $x_i \neq y_j$ for all $i,j$. 
\end{ex}

\begin{ex}
	Let $\lambda$ be a partition of $n$. As our notation for partitions we use both $\lambda = (\lambda_1,\lambda_2,\lambda_3,\ldots)$ and $\lambda = (1^{n_1}\, 2^{n_2}\, 3^{n_3}\ldots )$, so that
	$n = \sum_{j \geq 1} \lambda_j = \sum_{i\geq 1} i \cdot n_i$. Vakil and Wood \cite{vakilwood} defined an open subspace $\overline w^c_\lambda(M) \subset M^n/\sym_n$, and they studied the behaviour of these spaces under the operation of ``padding $\lambda$ with ones'', i.e.\ letting $n_1$ approach $\infty$ while keeping all $n_i$, $i \geq 2$, fixed.  
	
	We can understand their construction in our terms as follows: if $\lambda$ is a partition of $n$ with $n_1=0$, let 
	$\Delta_\lambda \subset M^n$ be the locus where the first $\lambda_1$ points coincide, the subsequent $\lambda_2$ points coincide, etc., and put $\A = \{\Delta_\lambda\}$. If $\lambda'$ is the partition of $N \geq n$ obtained by padding $\lambda$ with ones, then $\overline w^c_{\lambda'}(M) = F_\A(M,N)/\sym_N$. In particular, rational homological stability for the spaces $\overline w_\lambda^c(M)$ under the operation of padding $\lambda$ with ones follows from representation stability for the spaces $F_\A(M,n)$ as $n \to \infty$. 
\end{ex}

\begin{ex}
	The configuration space of $n$ points in $\mathbb P^2$ such that no three of them lie on a line and no six lie on a conic is of the form $F_\A(M,n)$, where $M= \mathbb P^2$ and $\A$ has two elements which are closed subvarieties of $M^3$ and $M^6$, respectively. 
\end{ex}

We are going to prove a homological stability result for the spaces $F_\A(M,n)$. To avoid dealing with trivial cases we will assume that $\vert S_i \vert \geq 2$ for all $i$, and that no subspace $A_i$ can be written as 
\begin{equation*}
\label{cond}A_i = (\pi_{S_i'}^{S_i})^{-1}(A_i')
\end{equation*}
where $S_i'$ is a proper subset of $S$. 

\subsection{The set-up}\label{set-upsection}
Observe that there is an open embedding
$$ F_\A(M, S \sqcup T )\hookrightarrow F_\A(M,S) \times F_\A(M,T).$$
This makes $S\mapsto F_\A(M,S)$ a twisted cocommutative coalgebra of spaces. Since Borel--Moore homology is contravariant for open embeddings and admits cross products
$$ H_\bullet^{BM}(X,R) \otimes H_\bullet^{BM}(Y,R) \to H_\bullet^{BM}(X \times Y,R)$$
(which are isomorphisms if $R$ is a field), we get a twisted commutative algebra in graded $R$-modules:
$$ S \mapsto H_\bullet^{BM}(F_\A(M,S),R),$$
for any choice of coefficients $R$. 

The functor $S \mapsto P_\A(S)$ forms a twisted commutative algebra in the category of posets: the product of a stratum in $M^S$ and a stratum in $M^T$ is a stratum in $M^{S \sqcup T}$, which identifies $P_\A(S) \times P_\A(T)$ with an order ideal in $P_\A(S \sqcup T)$. If $\beta \in P_\A(S)$ and $\gamma \in P_\A(T)$, then we write $\beta \times \gamma$ for their product in $P_\A(S \sqcup T)$. 

Let $L^\bullet(S)$ denote the complex of sheaves $L^\bullet(R)$ on $M^S$ constructed in the previous part, associated to the stratification $P_\A(S)$. The previous paragraph implies that $L^\bullet(S) \boxtimes L^\bullet(T)$ is a subcomplex of $L^\bullet(S \sqcup T)$, and $\D L^\bullet(S) \boxtimes \D L^\bullet(T)$ is a quotient of $\D L^\bullet(S \sqcup T)$. Applying $R\Gamma(-)$, we see that the functor 
$$ S \mapsto R\Gamma^{-\bullet}(M^S,\D L^\bullet(S))$$
is a twisted commutative algebra of chain complexes, whose homology is the tca $ S \mapsto H_\bullet^{BM}(F_\A(M,S),R).$

\subsection{The hypotheses}

Let us now describe the hypotheses on $M$ and $\A$ that will lead to a proof of representation stability. Fix $M$ and $\A$ as above, and a coefficient ring $R$.  

\begin{hyp}\label{hyp}
	We assume that $H^{BM}_d(M,R)\cong R$, and that homology vanishes above this degree. We assume that (possibly after refining the stratifications) all strata in all spaces $M^n$ have finitely generated homology groups, and there exists an increasing function $\sigma \colon P_\A(n) \to \Z$ for all $n$ such that:
	\begin{enumerate}
		\item If $\beta \in P_\A(S)$ satisfies $\sigma(\beta)=p$ and $\gamma \in P_\A(T)$ satisfies $\sigma(\gamma)=q$, then $\sigma(\beta \times \gamma) = p+q$. 
		\item If $\beta \in P_\A(n)$ satisfies $\sigma(\beta)=p$, then $H_{i}^{BM}(\overline S_\beta,R)$ vanishes above degrees $dn - 2p$, and $H_{dn-2p}^{BM}(\overline S_\beta,R)$ is a projective $R$-module.
	\end{enumerate}	
\end{hyp}

\begin{ex}
	Suppose that $M$ is a geometrically irreducible algebraic variety of dimension $(d/2)$, and that $\A$ consists of closed subvarieties.  Then it will indeed be the case that $H^{BM}_d(M,\Z) \cong \Z$, $H^{BM}_i(M,\Z)=0$ for $i>d$, and all strata have finitely generated homology. Let $\sigma(\beta)$ be the codimension of $S_\beta$. After refining the stratifications we may assume all strata irreducible, in which case $\sigma$ becomes a strictly increasing function, and conditions (1) and (2) are clearly satisfied. 
\end{ex}

\begin{ex}Suppose that all the subspaces in $\A$ are given by diagonals, so all closed strata are products of the same space $M$. This covers e.g. all the configuration spaces considered by Vakil and Wood. In this case, for $\overline S_\beta \cong M^k \subset M^n$, we can take $\sigma(\beta)=(n-k)$. If we suppose that $M$ has finitely generated homology and finite dimension $d>1$, and $H_{d}^{BM}(M,R) \cong R$, then Hypothesis \ref{hyp} is satisfied. To verify the second condition, note that if  $\beta \in P_\A(n)$ satisfies $\sigma(\beta)=p$, then $\overline S_\beta \cong M^{n-p}$, whose highest nonzero Borel--Moore homology group is $H_{d(n-p)}^{BM}(M^{n-p},R) \cong R$. Since we assumed $d>1$, we get in particular vanishing above degree $dn-2p$ and that the homology group in degree $dn-2p$ is projective. 
\end{ex}

From now on we shall assume that Hypothesis \ref{hyp} is satisfied. 

\subsection{Proof with coefficients in a field}
In this subsection, we fix a field $k$ of coefficients, and all homology groups will be taken with coefficients in $k$. In the algebraic case we take $k=\Q_\ell$, where $\ell$ is not equal to the characteristic. We will later see that the proof can be modified to work also for integral coefficients, but the added complications arising from the lack of a K\"unneth isomorphism obscure the ideas somewhat.

\begin{lem}There exists a twisted commutative algebra of spectral sequences satisfying
	$$ E^1_{pq}(S) = \bigoplus_{\substack{\beta \in P_\A(S) \\ \sigma(\beta)=p}} \bigoplus_{i+j=p+q-2} \widetilde H_{i}(0,\beta) \otimes H_{j}^{BM}(\overline S_\beta,k), $$
	and which converges to the twisted commutative algebra $ S \mapsto H_\bullet^{BM}(F_\A(M,S),k).$
\end{lem}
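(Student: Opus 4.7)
The plan is to refine the Borel--Moore variant of the main spectral sequence (Example~\ref{variants}, $1^\circ$), applied to each $M^S$ with its stratification $P_\A(S)$, into a spectral sequence of twisted commutative algebras. Concretely, I would equip the tca of chain complexes $S \mapsto R\Gamma^{-\bullet}(M^S, \D L^\bullet(S))$ of Section~\ref{set-upsection} with a multiplicative decreasing filtration, and pass to the associated spectral sequence.

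The filtration is $F^p L^\bullet(S)$ determined by $\sigma \colon P_\A(S) \to \Z$, as in Section~\ref{construction}, dualized to a filtration on $\D L^\bullet(S)$ and then fed through $R\Gamma$ to give a filtered complex. The associated spectral sequence converges to $H^{BM}_\bullet(F_\A(M,S),k)$, and over a field the derived tensor product appearing in the Borel--Moore variant reduces to the ordinary tensor product, so that the $E^1$-page takes the stated form
\[E^1_{pq}(S) = \bigoplus_{\substack{\beta \in P_\A(S)\\\sigma(\beta)=p}} \bigoplus_{i+j=p+q-2} \widetilde H_i(0,\beta) \otimes H_j^{BM}(\overline S_\beta,k).\]

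Next, I would verify that the tca multiplication on $S \mapsto R\Gamma^{-\bullet}(\D L^\bullet(S))$ carries $F^p \otimes F^q$ into $F^{p+q}$. This multiplication is induced, after Künneth, by the sheaf-level map $\D L^\bullet(S) \boxtimes \D L^\bullet(T) \to \D L^\bullet(S \sqcup T)$ on $M^{S \sqcup T}$, dual to the subcomplex inclusion $L^\bullet(S \sqcup T) \hookrightarrow L^\bullet(S) \boxtimes L^\bullet(T)$. Under the embedding of posets $P_\A(S) \times P_\A(T) \hookrightarrow P_\A(S \sqcup T)$, a pair $(\beta,\gamma)$ is sent to $\beta \times \gamma$, and the filtration level is $\sigma(\beta \times \gamma) = \sigma(\beta) + \sigma(\gamma)$ by Hypothesis~\ref{hyp}(1). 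This immediately gives the required filtered compatibility at the sheaf level, which survives the passage to $R\Gamma$.

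Having established a filtered tca of chain complexes with multiplicative filtration, the associated spectral sequence is automatically a tca of spectral sequences: the multiplications $F^p \otimes F^q \to F^{p+q}$ descend to pairings $E^r_{pq}(S) \otimes E^r_{p'q'}(T) \to E^r_{p+p',q+q'}(S \sqcup T)$ on each page, each differential $d^r$ is a derivation, and the spectral sequence converges as a tca to the Borel--Moore tca. The main obstacle is the second step: disentangling the several sheaf-level constructions (the $L^\bullet$-resolution, Verdier duality, the external product, and the filtration by $\sigma$) cleanly enough to verify the filtered compatibility. Once the bookkeeping is done, the substantive content is simply the identity $\sigma(\beta\times\gamma) = \sigma(\beta)+\sigma(\gamma)$ from Hypothesis~\ref{hyp}(1).
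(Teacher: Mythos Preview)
Your approach is essentially the same as the paper's: filter the tca of chain complexes $S \mapsto R\Gamma^{-\bullet}(M^S,\D L^\bullet(S))$ using $\sigma$, check that the filtration is multiplicative via the additivity $\sigma(\beta\times\gamma)=\sigma(\beta)+\sigma(\gamma)$, and pass to the associated tca of spectral sequences. The paper's two-line proof cites ``Condition (2)'' of Hypothesis~\ref{hyp}, but you are right that it is Condition (1) doing the work here; your write-up is in fact more careful on this point than the paper's.
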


\begin{proof}
	By Condition (2) in Hypothesis \ref{hyp}, the filtration on $L^\bullet(S) \boxtimes L^\bullet(T)$ induced by $\sigma$ agrees with the one on $L^\bullet(S \sqcup T)$, when we consider $L^\bullet(S) \boxtimes L^\bullet(T)$ as a subcomplex of $L^\bullet(S \sqcup T)$. This makes the twisted commutative algebra $S \mapsto R\Gamma^{-\bullet}(M^S,\D L^\bullet(S))$ a tca in filtered chain complexes, and the associated spectral sequence is given as above.
\end{proof}

We say that an element $\beta \in P_\A(T)$ is \emph{indecomposable} if whenever $T= S \sqcup S'$ and $\beta$ is in the image of the multiplication map $P_\A(S) \times P_\A(S') \to P_\A(T)$, then $S$ or $S'$ is empty.  

\begin{lem}\label{codimlemma}
	There exists a constant $C$ such that if $\beta \in P_\A(T)$ is indecomposable, then $\sigma(\beta) \geq C \cdot \vert T \vert$. 
\end{lem}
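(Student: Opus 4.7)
The plan is to combine a combinatorial hypergraph-connectivity argument with the strict monotonicity of $\sigma$. Let $k := \max_i |S_i|$, which is finite since $\A$ is finite. I aim to prove that $\sigma(\beta) - \sigma(\hat 0) \geq (|T|-1)/(k-1)$ whenever $\beta \in P_\A(T)$ is indecomposable with $|T| \geq 2$; this yields the lemma with any $C \leq 1/(2(k-1))$, after absorbing the trivial cases $|T| \leq 1$.

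First I would associate to each $\beta \in P_\A(T)$ a hypergraph $H_\beta$ on vertex set $T$ whose edges are the subsets $\iota(S_i) \subset T$, taken over all pairs $(i,\iota\colon S_i \hookrightarrow T)$ such that $\overline S_\beta$ is contained in the pullback condition $C(i,\iota) := (\pi^T_{\iota(S_i)})^{-1}(\iota_\ast A_i)$. The standing assumption that no $A_i$ is itself pulled back from a proper subset of $S_i$ ensures that each such condition genuinely involves every coordinate in $\iota(S_i)$. Indecomposability of $\beta$ then forces $H_\beta$ both to cover $T$ and to be connected: an uncovered vertex $v \in T$ leaves its coordinate unconstrained, so $\overline S_\beta = Y \times M$ and hence $\beta = \beta' \times \hat 0_{\{v\}}$; a nontrivial partition $T = T_1 \sqcup T_2$ with no edge crossing splits the defining collection, exhibiting $\overline S_\beta$ as a box product and giving $\beta = \beta_1 \times \beta_2$ with both factors on nonempty sets. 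Both scenarios contradict indecomposability. A standard greedy counting argument (each hyperedge drops the number of components by at most $k-1$) then shows $H_\beta$ has at least $(|T|-1)/(k-1)$ edges.

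Next I would convert this edge count into a lower bound on $\sigma(\beta)$. Pick a minimal subcollection $J = \{D_1, \ldots, D_m\}$ of such pullback conditions with $\bigcap_{j=1}^m D_j = \overline S_\beta$, meaning no $D_j$ is implied by the others. Applying the previous paragraph's argument with $J$ in place of the full condition set shows that the sub-hypergraph carried by $J$ is still connected on $T$, so $m \geq (|T|-1)/(k-1)$. Fixing an arbitrary ordering on $J$ and imposing the $D_j$'s in sequence, each partial intersection $D_1 \cap \ldots \cap D_s$ is strictly smaller than $D_1 \cap \ldots \cap D_{s-1}$ (otherwise $D_s$ would be redundant, violating minimality), producing a strictly increasing chain $\hat 0 = \beta_0 < \beta_1 < \ldots < \beta_m = \beta$ in $P_\A(T)$. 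Since $\sigma$ is a strictly increasing $\Z$-valued function, $\sigma(\beta) \geq \sigma(\hat 0) + m \geq \sigma(\hat 0) + (|T|-1)/(k-1)$, which establishes the claim.

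The hard part will be realizing this chain honestly inside $P_\A(T)$: a partial intersection $D_1 \cap \ldots \cap D_s$ may be reducible and need not itself be the closure of any single stratum. The remedy is to choose, at each step, the stratum whose closure is an irreducible component of the partial intersection that contains $\overline S_\beta$; the strict descent of the chain then follows from the global minimality of $J$, and the bound on $\sigma(\beta)$ is unaffected.
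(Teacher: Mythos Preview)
Your argument is correct and follows essentially the same route as the paper: pick an irredundant collection of defining pullback conditions, use indecomposability to lower-bound its size linearly in $|T|$, and convert this into a strictly increasing chain in $P_\A(T)$ along which $\sigma$ must jump by at least one at each step. The paper invokes only the weaker combinatorial fact that the images of the $S_i$ must \emph{cover} $T$ (giving at least $|T|/k$ conditions) rather than your stronger observation that the hypergraph must be \emph{connected}, and it does not spell out the reducibility issue you flag in your last paragraph; these are refinements of detail, not a different idea.
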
 

\begin{proof}
	The stratum $S_\beta$ is (an open stratum inside) an intersection of subspaces of the form $(\pi_{S_i}^T)^{-1}(A_i)$, for some collections of inclusions $S_i \hookrightarrow T$. We may assume this collection of subspaces to be irredundant. In order that $\beta$ be indecomposable, it is certainly necessary that the images of the $S_i \hookrightarrow T$ cover $T$, which means that the number of subspaces that one needs to intersect to obtain $S_\beta$ grows linearly in $\vert T\vert$. Moreover, since we assumed the collection irredundant and that $\sigma$ increasing, the value of $\sigma$ must go up by at least $1$ for each subspace we intersect, which proves the result. \end{proof}

Let $E$ be the twisted commutative algebra in graded vector spaces given by
$$ S \mapsto E_\bullet(S); \qquad E_i(S) = \bigoplus_{p+q=i} E^1_{pq}(S). $$

\begin{lem} The tca $E$ satisfies the hypotheses of Lemma \ref{tcalemma}, so that $\S^d E$ is a finitely generated FI-module.\end{lem}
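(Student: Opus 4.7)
The plan is to exhibit a set of tca generators for $E$ and verify the three conditions of Lemma~\ref{tcalemma}. Since $S \mapsto P_\A(S)$ is a tca of posets realising $P_\A(S) \times P_\A(T)$ as an order ideal of $P_\A(S \sqcup T)$, every stratum factors uniquely into indecomposables, and for a product one has $\overline S_{\beta_1 \times \beta_2} = \overline S_{\beta_1} \times \overline S_{\beta_2}$ and $[0, \beta_1 \times \beta_2] \cong [0, \beta_1] \times [0, \beta_2]$. Combining the K\"unneth isomorphism over the field $k$ with the standard product formula $\widetilde H_k(0, \beta_1 \times \beta_2) \cong \bigoplus_{i+j = k-2} \widetilde H_i(0, \beta_1) \otimes \widetilde H_j(0, \beta_2)$ for the reduced homology of the open interval in a product of bounded posets, I expect the tca multiplication on $E^1$ to identify the contribution of $\beta_1 \times \beta_2$ with the tensor product of the contributions of $\beta_1$ and $\beta_2$. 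A set of tca generators of $E$ is then obtained by choosing, for every indecomposable $\beta \in P_\A(n)$ (over all $n$), a basis of $\widetilde H_\bullet(0,\beta) \otimes H_\bullet^{BM}(\overline S_\beta, k)$.

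Next I bound degrees. A basis element of $\widetilde H_{i'}(0,\beta) \otimes H_j^{BM}(\overline S_\beta, k)$ with $\sigma(\beta) = p$ lives in total degree $i' + j + 2$. Since $\sigma$ is strictly increasing, every chain in $(0, \beta)$ has at most $p - 1$ elements, so $\N(0, \beta)$ has dimension at most $p - 2$ and $i' \leq p - 2$ (with the degenerate convention $i' = -2$ when $p = 0$). Hypothesis~\ref{hyp}(2) gives $j \leq dn - 2p$, whence $i' + j + 2 \leq dn - p$. At arity $n = 1$, the assumption $|S_i| \geq 2$ forces $P_\A(1) = \{0\}$, so $p = 0$; the unique generator of degree $d$ is the fundamental class $a_0 \in H_d^{BM}(M,k) \cong k$. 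At arity $n \geq 2$, the only indecomposable strata are $\beta \neq 0$, and Lemma~\ref{codimlemma} gives $\sigma(\beta) \geq C n$ for a positive constant $C$, so every such generator sits in degree $\leq dn - C n < dn$. This verifies that $i \leq dn$, with equality only for $a_0 \in E^d(1)$.

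For the finiteness condition, fix $p \in \Z$ and consider generators in degree $\geq dn - p$. The inequality $dn - p \leq dn - \sigma(\beta)$ forces $\sigma(\beta) \leq p$; combining with $\sigma(\beta) \geq C n$ at arity $n \geq 2$ yields $n \leq p/C$, leaving only finitely many arities. At each such arity the poset $P_\A(n)$ is finite and every closed stratum has finitely generated Borel--Moore homology by Hypothesis~\ref{hyp}, so only finitely many generators arise. A parallel finiteness check at arity~$1$ uses only that $H_\bullet^{BM}(M,k)$ is finitely generated. This completes the verification of all three hypotheses of Lemma~\ref{tcalemma}.

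The main obstacle I anticipate is the first step --- making precise the claim that the tca multiplication on $E^1$ decomposes the contribution of a product stratum into the tensor product of the contributions of its factors. This requires both the K\"unneth isomorphism for Borel--Moore homology (which is why field coefficients enter at this point) and the product-of-bounded-posets formula for the reduced homology of open intervals, plus a check of their compatibility with the spectral sequence multiplication inherited from the inclusion $\D L^\bullet(S) \boxtimes \D L^\bullet(T) \hookleftarrow \D L^\bullet(S \sqcup T)$. Once these identifications are in place, the degree arithmetic of the second and third paragraphs is routine.
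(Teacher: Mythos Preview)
Your proposal is correct and follows essentially the same approach as the paper: identify generators with indecomposable strata, bound their degrees by combining the bound $i' \leq p-2$ on poset homology with Hypothesis~\ref{hyp}(2), and invoke Lemma~\ref{codimlemma} for the finiteness condition. You actually supply more justification than the paper does --- in particular, the paper simply asserts that $E$ is generated by classes indexed by indecomposable $\beta$, whereas you correctly identify this as the substantive step and spell out the K\"unneth and poset-product ingredients needed.
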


\begin{proof}The tca $E$ is generated by classes $\widetilde H_{i}(0,\beta) \otimes H_{j}^{BM}(\overline S_\beta,k)$ where $\beta$ ranges over indecomposable elements of $P_\A(n)$. 
	
If $\beta \in P_\A(n)$ satisfied $\sigma(\beta)=p$, then $\widetilde H_{i}(0,\beta)$ vanishes above degree $p-2$, and     $H_{j}^{BM}(\overline S_\beta,k)$ vanishes above degree $dn-2p$. Thus the corresponding generators in $E_i(n)$ satisfy $i\leq dn-p$. In particular, we get a generator in degree $i=dn$ only for $p=0$. But the open stratum is indecomposable only when $n=1$, in which case we get a single generator in this degree from the one-dimensional space $H_{d}^{BM}(M,k)$.

Moreover, by Lemma \ref{codimlemma}, for each $p\geq 0$ there are only finitely many strata in $P_\A(n)$ (summed over all $n$) with $\sigma(\beta) \leq p$, which means that only finitely many of these generators satisfy $i\geq dn-p$. Thus the hypotheses of Lemma \ref{tcalemma} are satisfied.
	\end{proof}


\begin{thm}\label{A}
	The FI-module given by $n \mapsto H^{BM}_{i+dn}(F_\A(M,n),k)\otimes \mathrm{sgn}_n^{\otimes d}$ is finitely generated for all $i \in \Z$. 
\end{thm}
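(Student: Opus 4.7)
The plan is to propagate finite generation from the $E^1$ page of the spectral sequence constructed in the preceding lemma to the abutment, using Noetherianity of FI-modules over a field. The first ingredient is that the class $a_0 \in H_d^{BM}(M,k)$ lies in $E^1_{0,d}(1)$: since every $|S_i|\geq 2$, there are no injections $S_i \hookrightarrow \{\ast\}$, so $P_\A(1)$ consists of the single open stratum $M$ and the arity-$1$ spectral sequence is concentrated in the column $p=0$. In particular $a_0$ is automatically a permanent cycle, and multiplication by $a_0$ in the tca of spectral sequences induces compatible FI-module structures on every page $E^r$ and on the abutment (twisted by $\mathrm{sgn}_n^{\otimes d}$ when $d$ is odd). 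The preceding lemma, combined with Lemma \ref{tcalemma} applied to $E^1$, then says that $n \mapsto E^1_{i+dn}(n) \otimes \mathrm{sgn}_n^{\otimes d}$ is a finitely generated FI-module for every $i \in \Z$.

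Next, I would transfer finite generation to the $E^\infty$ page. Because $d^r$ is a derivation of the tca structure on $E^r$ and $a_0$ is a cycle, $d^r(x \cdot a_0) = \pm\, a_0 \cdot d^r(x)$; hence the iterated cycles $Z^r \subseteq E^1$ and boundaries $B^r \subseteq E^1$ are sub-FI-modules, and $E^\infty = \bigcap_r Z^r / \bigcup_r B^r$ is an FI-subquotient of $E^1$. By Noetherianity of the category of FI-modules over a field \cite{churchellenbergfarb}, every FI-subquotient of a finitely generated FI-module is finitely generated, so $n \mapsto E^\infty_{i+dn}(n) \otimes \mathrm{sgn}_n^{\otimes d}$ is finitely generated for each $i$.

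Finally, I would pass from $E^\infty$ to the abutment. The spectral sequence filtration on $H^{BM}_\bullet(F_\A(M,n),k)$ is by sub-FI-modules, as the filtration on $L^\bullet$ is tca-compatible. For each fixed $i$, the dimension bound $\dim \N(0,\beta) \leq \sigma(\beta)-2$ combined with Hypothesis \ref{hyp}(2) forces $E^\infty_{p,i+dn-p}(n)=0$ unless $0 \leq p \leq -i$ (the case $i>0$ being trivially zero). Thus the filtration on $n \mapsto H^{BM}_{i+dn}(F_\A(M,n),k) \otimes \mathrm{sgn}_n^{\otimes d}$ has at most $-i+1$ nonzero graded pieces, each a finitely generated FI-module, and so the abutment is itself finitely generated. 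The main obstacle I anticipate is precisely the passage from $E^1$ to $E^\infty$: the spectral sequence degenerates only at some page $r(n)$ that grows with the arity $n$, so finite generation of a fixed $E^r$ is insufficient, and Noetherianity is the essential tool that allows one to handle $E^\infty$ directly as an FI-subquotient of the finitely generated $E^1$.
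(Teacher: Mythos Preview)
Your proof is correct and follows essentially the same route as the paper: establish finite generation at $E^1$ via Lemma \ref{tcalemma}, pass to $E^\infty$ as an FI-subquotient using Noetherianity over a field, and identify $E^\infty$ with the associated graded of the abutment. You supply several details the paper leaves implicit --- the verification that $a_0$ is a permanent cycle (since $P_\A(1)$ is trivial), the explicit check that $d^r$ commutes with multiplication by $a_0$, and the bound $0 \leq p \leq -i$ showing the filtration on each $H^{BM}_{i+dn}$ has uniformly bounded length --- but the skeleton of the argument is identical.
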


\begin{proof}
	By the previous lemma, $$ n \mapsto (\S^d E)(n)  = \bigoplus_{p+q=i+dn} E^1_{pq}(n) \otimes \mathrm{sgn}_n^{\otimes d}$$ is a finitely generated FI-module. Then so is $ n \mapsto  \bigoplus_{p+q=i+dn} E^\infty_{pq}(n) \otimes \mathrm{sgn}_n^{\otimes d}, $ being a subquotient of a finitely generated FI-module \cite[Theorem 1.3]{churchellenbergfarb}. But the latter is just the associated graded of the FI-module $n \mapsto H^{BM}_{i+dn}(F_\A(M,n),k) \otimes \mathrm{sgn}_n^{\otimes d}$ for the Leray filtration.
\end{proof}
\begin{rem}The sign representation which appears for odd $d$ arises from Lemma \ref{tcalemma}, and does not play a role in the algebraic case since an algebraic variety has even (real) dimension. That the sign representation should appear is clear, if we want to recover representation stability for the usual cohomology $H^i(F(M,n),k)$ from Poincar\'e duality when $M$ is an oriented manifold. Indeed, the Poincar\'e duality isomorphism for $F(M,n)$ involves capping with the fundamental class, which generates the $1$-dimensional vector space $H_{dn}^{BM}(F(M,n),k) \cong H_{dn}^{BM}(M^n,k)$. When $d$ is even this vector space carries the trivial representation of $\sym_n$, but when $d$ is odd it has the sign representation.
	\end{rem}
\begin{rem}\label{homologymanifoldremark}
	A space $M$ is called an $R$-homology manifold of dimension $d$ if for all $x \in M$ one has $$H_i(M,M \setminus\{x\};R) \cong \begin{cases}
	R & i=d \\ 0 & i \neq d.
	\end{cases}$$
	Equivalently, $\D R \cong R[d]$. A trivial example is an oriented $d$-manifold for any $R$; a more typical example is that a complex algebraic variety with finite quotient singularities is a $\Q$-homology manifold (of dimension twice its dimension over $\C$). An $R$-homology manifold of dimension $d$ satisfies Poincar\'e duality in the form $H^{BM}_{i}(M,R) \cong H^{d-i}(M,R)$. If we add to Hypothesis \ref{hyp} the condition that $M$ is an $R$-homology manifold of dimension $d$, then the conclusion of Theorem \ref{A} (if $R$ is a field, or the results of the next subsection if $R$ is a PID) become equivalent to the claim that $n \mapsto H^i(F_\A(M,n),R)$ is a finitely generated FI-module. For instance, if $M$ is a connected oriented manifold of dimension $d > 1$, then $H^i(F(M,n),\Z)$ is a finitely generated FI-module for all $i$; this is how the results of \cite{church,cefn} can be obtained as specializations of those in this paper.
\end{rem}

\subsection{Proof for integral coefficients}

In the proof of Theorem \ref{A} in the preceding subsection, we used that any subquotient of a finitely generated FI-module is finitely generated. This was proven for field coefficients in \cite{churchellenbergfarb}, but the result was then extended to any noetherian ring in \cite{cefn}. We will now use the latter result to give a proof also for integral coefficients.

However, the real reason we used field coefficients in the preceding subsection was to have a robust K\"unneth isomorphism: without it, it would not be true that generators for the twisted commutative algebra we considered arise from indecomposable strata. Namely, if $\beta$ is decomposable --- say $\overline S_\beta = \overline S_\gamma \times \overline S_{\gamma'}$ --- then the cross product map $$H_\bullet^{BM}(\overline S_\gamma,\Z) \otimes H_\bullet^{BM}(\overline S_{\gamma'},\Z) \to H_\bullet^{BM}(\overline S_\beta,\Z)$$ is not necessarily surjective. To remedy this, we will need to work on the chain level, analogous to \cite[Lemma 4.1]{cefn}. 

For the remainder of this section we fix a coefficient ring $R$ which we assume to be a PID, e.g. $R=\Z$ or $R=\Z_\ell$. In order for the proof to work, we shall need to verify a refinement of Hypothesis \ref{hyp}.

\begin{lem}\label{hyp2}Assume Hypothesis \ref{hyp}. For each stratum $\overline S_\beta$ in any of the spaces $M^n$, we can choose a quasi-isomorphism 
	$$ C_\bullet(\overline S_\beta) \simeq R\Gamma^{-\bullet}(\overline S_\beta,\D R)$$
	where $C_\bullet(\overline S_\beta)$ is a bounded complex of finitely generated free modules, and such that
	\begin{itemize}
		\item For any decomposable stratum $\overline S_\alpha \times\overline S_\beta$, we have an equality
		$$C_\bullet(\overline S_\alpha) \otimes C_\bullet(\overline S_{\beta}) = C_\bullet(\overline S_\alpha \times\overline S_\beta)$$
		compatible with the quasi-isomorphism
		$$ R\Gamma^{-\bullet}(\overline S_\alpha,\D R) \otimes R\Gamma^{-\bullet}(\overline S_\beta,\D R) \simeq R\Gamma^{-\bullet}(\overline S_\beta \times \overline S_\alpha,\D R);$$
		\item For $\overline S_\alpha \subset \overline S_\beta$, there is a map
		$$ C_\bullet(\overline S_{\alpha }) \to C_\bullet(\overline S_\beta)$$
		compatible with the map 
		$$ R\Gamma^{-\bullet}(\overline S_\alpha,\D R) \to R\Gamma^{-\bullet}(\overline S_\beta,\D R); $$
		\item If $\overline S_\beta \subset M^n$ has $\sigma(\beta)=p$ then $C_\bullet^{BM}(\overline S_\beta,R)$ vanishes above degree $dn-2p$, and $C_d(M) \cong R$. 
	\end{itemize}
\end{lem}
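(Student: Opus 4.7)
The plan is to build the $C_\bullet(\overline S_\beta)$ inductively, using that every stratum decomposes uniquely as a product $\overline S_{\beta_1}\times\cdots\times\overline S_{\beta_k}$ of indecomposable factors. I will choose chain models for the indecomposable pieces separately, and then \emph{define} the chain model of a decomposable stratum to be the tensor product of those of its factors, so that the first bullet of the lemma holds on the nose.

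First I would fix a bounded complex $C_\bullet(M)$ of finitely generated free $R$-modules, with $C_d(M)\cong R$, $C_i(M)=0$ for $i>d$, equipped with a quasi-isomorphism to $R\Gamma^{-\bullet}(M,\D R)$. Since $R$ is a PID and $H_\bullet^{BM}(M,R)$ is finitely generated, vanishes in degrees $>d$, and is free of rank one in degree $d$, such a model exists: take a bounded free resolution with a single generator in top degree. For each other indecomposable stratum $\overline S_\beta$ (with $\beta\in P_\A(n)$, $\sigma(\beta)=p$), I would similarly pick a bounded free model $C_\bullet(\overline S_\beta)$ supported in degrees $\leq dn-2p$ together with a chosen quasi-isomorphism to $R\Gamma^{-\bullet}(\overline S_\beta,\D R)$; this is permitted by Hypothesis \ref{hyp}(2). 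For decomposable $\beta=\beta_1\times\cdots\times\beta_k$, set
$$ C_\bullet(\overline S_\beta) \;:=\; C_\bullet(\overline S_{\beta_1}) \otimes \cdots \otimes C_\bullet(\overline S_{\beta_k}) $$
and take the Künneth quasi-isomorphism; the degree bound passes to tensor products by Hypothesis \ref{hyp}(1).

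For each inclusion $\overline S_\alpha \subset \overline S_\beta$, the induced morphism $R\Gamma^{-\bullet}(\overline S_\alpha,\D R)\to R\Gamma^{-\bullet}(\overline S_\beta,\D R)$ in the derived category lifts to an honest chain map $C_\bullet(\overline S_\alpha)\to C_\bullet(\overline S_\beta)$, because $C_\bullet(\overline S_\alpha)$ is a bounded complex of free (hence projective) $R$-modules. Fix one such lift for every inclusion. The hard part will be arranging these choices coherently enough to support the subsequent chain-level construction of $L^\bullet$, e.g.\ so that the resulting differential on the chain-level $L^\bullet$ squares to zero and so that tensor products of chain maps on factors agree with chain maps on product inclusions. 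Strict compatibility can be enforced by induction on $\sigma(\beta)-\sigma(\alpha)$: whenever the inclusion itself factors as a product of previously-handled inclusions, take its chain map to be the corresponding tensor product; otherwise, choose a lift and adjust it by chain homotopies (which exist by the same projectivity argument) to make the already-constructed triangles commute on the nose. This rectification step is routine but bookkeeping-heavy, and is where I expect most of the work to go.
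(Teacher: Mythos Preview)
Your overall plan---choose small free models for indecomposable strata, take tensor products for decomposable ones, and then lift the inclusion maps---is close to the paper's, but the ``rectification step'' you flag as bookkeeping is in fact where the real content lies, and as stated it does not work.

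The problem is the following. Suppose $\alpha < \gamma_1,\gamma_2 < \beta$ with $\sigma(\beta)-\sigma(\alpha)=2$. At the inductive stage for gap $2$ you must choose a single map $f_{\alpha\beta}\colon C_\bullet(\overline S_\alpha)\to C_\bullet(\overline S_\beta)$ that equals both $f_{\gamma_1\beta}\circ f_{\alpha\gamma_1}$ and $f_{\gamma_2\beta}\circ f_{\alpha\gamma_2}$. These two composites are only chain homotopic, not equal, because the $f$'s with gap $1$ were chosen independently as arbitrary lifts. No amount of adjusting $f_{\alpha\beta}$ can make it equal to two different maps; and going back to modify the gap-$1$ maps merely pushes the incompatibility elsewhere. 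Projectivity buys you existence of homotopies, not the ability to strictify an entire poset-shaped homotopy-coherent diagram while keeping the objects fixed.

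The paper avoids this by first producing a \emph{strictly functorial} (but large) target: take a functorial free resolution of $R\Gamma^{-\bullet}(\overline S_\beta,\D R)$ and truncate (the hypothesis that $H_{dn-2p}^{BM}$ is projective lets one split the truncation, so functoriality survives). Call this $C_\bullet^{BM}(\overline S_\beta,R)$. Then one builds the small models inductively \emph{inside} these: having chosen $C_\bullet(\overline S_\beta)\hookrightarrow C_\bullet^{BM}(\overline S_\beta,R)$ for all $\beta<\alpha$, the image of $\colim_{\beta<\alpha}C_\bullet(\overline S_\beta)$ in $C_\bullet^{BM}(\overline S_\alpha,R)$ is finitely generated, so one can enlarge it to a finitely generated free subcomplex $C_\bullet(\overline S_\alpha)$ that is still quasi-isomorphic. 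The maps $C_\bullet(\overline S_\beta)\to C_\bullet(\overline S_\alpha)$ are then \emph{forced} by the ambient functorial maps and compose strictly because the ambient ones do. The tensor-product condition for decomposable strata is imposed exactly as you propose, as part of the same induction on $n$.

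In short: the missing idea is to anchor everything in a strictly functorial ambient model and choose the small complexes as subcomplexes of it, rather than picking small models first and trying to rectify the maps afterwards.
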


It is immediate from Hypothesis \ref{hyp} that such a complex $C_\bullet(\overline S_\beta)$ can be constructed for each individual stratum, but we need the choices to satisfy various compatibilities.

Taking the lemma for granted for the moment, the idea will be to run nearly the same proof, but instead of starting at the $E^1$ page of the spectral sequence, we start at $E^0$. Equivalently, we work directly on the level of the double complex computing $R\Gamma^{-\bullet}(M^n,\D L^\bullet(R))$, associated to our filtration of $\D L^\bullet(R)$. The filtration on $\D L^\bullet(R)$ has its associated graded pieces quasi-isomorphic to sums of complexes of the form $\widetilde C_{-\bullet-2}(0,\beta) \otimes (i_\beta)_\ast \D R$, where $i_\beta$ is the inclusion of a closed stratum, so the columns of this double complex are of the form $R\Gamma^{-\bullet}(M^n,\widetilde C_{-\bullet-2}(0,\beta) \otimes (i_\beta)_\ast \D R)$. Under our Hypothesis \ref{hyp2} we may (for all $\beta$) replace this with the totalization of the double complex $\widetilde C_{\bullet+2}(0,\beta) \otimes C_{\bullet}(\overline S_\beta)$. Then this collection of double complexes becomes a twisted commutative algebra, and that we get a tca of spectral sequences:

\begin{lem}There exists a twisted commutative algebra of spectral sequences satisfying
	$$ E^0_{pq}(S) = \bigoplus_{\substack{\beta \in P_\A(S) \\ \sigma(\beta)=p}} \bigoplus_{i+j=p+q-2} \widetilde C_{i}(0,\beta) \otimes C_{j}(\overline S_\beta), $$
	and which converges to the twisted commutative algebra $ S \mapsto H_\bullet^{BM}(F_\A(M,S),R).$
\end{lem}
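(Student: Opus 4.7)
The plan is to repeat the proof of the previous lemma (which built a tca of spectral sequences starting at $E^1$) but working one level earlier, at $E^0$, using the explicit chain models $C_\bullet(\overline S_\beta)$ furnished by Lemma \ref{hyp2} in place of the abstract complexes $R\Gamma^{-\bullet}(\overline S_\beta,\D R)$. The point is that the proof of the previous lemma used the K\"unneth isomorphism (valid only over a field) to keep the tca structure visible on homology; here the compatibilities built into Lemma \ref{hyp2} will give us strict K\"unneth on the nose at the chain level, which is all we need.

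Concretely, as observed in Subsection \ref{set-upsection}, the assignment $S \mapsto R\Gamma^{-\bullet}(M^S,\D L^\bullet(S))$ is a tca of chain complexes with homology $S \mapsto H_\bullet^{BM}(F_\A(M,S),R)$. Dualizing the filtration of $L^\bullet(S)$ gives a filtration of $\D L^\bullet(S)$ whose associated graded in bidegree $p$ is quasi-isomorphic to a direct sum (over $\beta$ with $\sigma(\beta) = p$) of complexes of the form $\widetilde C_{-\bullet-2}(0,\beta) \otimes (i_\beta)_!(i_\beta)^!\D R$. Applying $R\Gamma^{-\bullet}(M^S,-)$ and using the prescribed quasi-isomorphism $C_\bullet(\overline S_\beta) \simeq R\Gamma^{-\bullet}(\overline S_\beta, \D R)$ from Lemma \ref{hyp2}, we replace each column of the associated double complex by the totalization of $\widetilde C_{\bullet+2}(0,\beta) \otimes C_\bullet(\overline S_\beta)$. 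The result is a filtered chain complex whose column $p$ (i.e.\ whose $E^0_{p,\bullet}$) has the form stated in the lemma, and which still converges to $H_\bullet^{BM}(F_\A(M,S),R)$ since we have only made quasi-isomorphic substitutions.

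It remains to check that these substitutions are compatible with the tca structure, i.e.\ that $S \mapsto D^S$ (with $D^S$ the above double complex) is itself a tca in filtered chain complexes. This is exactly what the compatibilities in Lemma \ref{hyp2} are designed to ensure: the first bullet point yields the strict equality $C_\bullet(\overline S_\alpha) \otimes C_\bullet(\overline S_\beta) = C_\bullet(\overline S_\alpha \times \overline S_\beta)$ which matches, via the tca structure on the poset $S \mapsto P_\A(S)$, the inclusion $D^S \otimes D^T \hookrightarrow D^{S \sqcup T}$; the second bullet point gives compatible chain maps $C_\bullet(\overline S_\alpha) \to C_\bullet(\overline S_\beta)$ for $\overline S_\alpha \subset \overline S_\beta$, so that the horizontal differentials (coming from adding an element to a chain in the poset, exactly as in the definition of $L^\bullet$) are strictly defined on $D^S$ and are compatible with multiplication. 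Condition (1) of Hypothesis \ref{hyp} guarantees that the filtration on $D^{S \sqcup T}$ restricts on the image of $D^S \otimes D^T$ to the convolution of the filtrations, so the multiplication is a map of filtered complexes. Taking the spectral sequence of each filtered complex $D^S$ therefore yields a tca of spectral sequences with the asserted $E^0$ page.

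The main obstacle is packaged into Lemma \ref{hyp2} itself, whose proof requires an inductive construction of the $C_\bullet(\overline S_\beta)$: one should proceed by induction on the number of factors in a decomposition $\overline S_\beta \cong \prod \overline S_{\beta_i}$, using the first bullet to force the chain model on a decomposable stratum and the fact that $R$ is a PID to choose, for each indecomposable stratum, a bounded free resolution of $R\Gamma^{-\bullet}(\overline S_\beta,\D R)$ that fits over the maps already constructed on closures of smaller strata. Granted this lemma, everything above is formal.
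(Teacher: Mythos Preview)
Your proposal is correct and matches the paper's approach exactly: the paper does not give this lemma a separate proof but rather states it after the paragraph explaining that one replaces each column $R\Gamma^{-\bullet}(M^n,\widetilde C_{-\bullet-2}(0,\beta)\otimes (i_\beta)_\ast \D R)$ of the double complex by $\widetilde C_{\bullet+2}(0,\beta)\otimes C_\bullet(\overline S_\beta)$ via Lemma \ref{hyp2}, obtaining a tca of filtered complexes and hence of spectral sequences. Your write-up fills in more of the verification that the tca structure is preserved (and you correctly cite Condition (1) of Hypothesis \ref{hyp} for the filtration compatibility; the paper's reference to Condition (2) in the analogous $E^1$ lemma appears to be a slip).
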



If we consider the twisted commutative algebra in graded abelian groups given by
$$ S \mapsto E_\bullet(S); \qquad E_i(S) = \bigoplus_{p+q=i} E_{pq}^0(S), $$
then this tca \emph{will} be generated by classes $\widetilde C_{i}(0,\beta) \otimes C_{j}^{BM}(\overline S_\beta,R)$ where $\beta$ ranges over indecomposable elements of $P_\A(n)$. In particular, it satisfies the hypotheses of Lemma \ref{tcalemma}, for the same reason as the tca $E$ considered in the previous subsection, and the rest of the proof carries over without any changes. 

Let us now argue that Lemma \ref{hyp2} is satisfied. 

\begin{proof} (of Lemma \ref{hyp2}) First off, we replace each $R\Gamma^{-\bullet}(\overline S_\beta,\D R)$ by a functorial free resolution, and then apply the ``wise'' truncation functor $\tau^{\leq (dn-2p)}$. Let us call the resulting complexes $C_\bullet^{BM}(\overline S_\beta,R)$. Unfortunately, the truncation functor has the wrong functoriality: there is for any chain complex $C_\bullet$ a map $C_\bullet \to \tau^{\leq n}C_\bullet$, but we need a map in the opposite direction. However, the assumption that $H_{dn-2p}^{BM}(\overline S_\beta,R)$ is projective and that $R$ is a PID implies that $C_{dn-2p}^{BM}(\overline S_\beta,R)$ is itself free. In particular, $C_\bullet^{BM}(\overline S_\beta,R)$ is itself a free resolution, and the truncation map has a section. One checks that any choice of section gives rise to a well defined chain map $C_\bullet^{BM}(\overline S_\beta,R) \to C_\bullet^{BM}(\overline S_\alpha,R)$ for $\overline S_\beta \subset \overline S_\alpha$, making the assignment $\beta \mapsto C_\bullet^{BM}(\overline S_\beta,R)$ functorial. 	
	
To replace these complexes with ones that are finitely generated in each degree, we work inductively, starting with $M$ itself. If we have chosen complexes $C_\bullet(\overline S_\beta) \stackrel \sim \to C_\bullet^{BM}(\overline S_\beta,R)$ for all $\beta \in P_\A(n)$, $n<N$, then the condition that we have an ``on-the-nose'' K\"unneth isomorphism determines our choice of $C_\bullet(\overline S_\beta)$ for all decomposable strata $\beta \in P_\A(N)$. Now I claim that if we have compatible choices of  $C_\bullet(\overline S_\beta)$ for all $\beta$ in some order ideal $I \subset P_\A(N)$, and $\alpha$ is any minimal element of $P_\A(N) \setminus I$, then we can also choose $C_\bullet(\overline S_\alpha)$ compatibly (and thus the inductive procedure can be continued). Indeed, consider the composition 
$$ \colim_{\beta < \alpha} (C_\bullet(\overline S_\beta)) \longrightarrow \colim_{\beta < \alpha}(C_\bullet^{BM}(\overline S_\beta,R)) \longrightarrow C_\bullet^{BM}(\overline S_\alpha,R). $$
We note that the colimit over a finite poset can be defined as the totalization of a functorially defined chain
complex, which is of finite rank in each degree if each chain complex in the colimit is. By induction, the image of this composition is then finitely generated in each degree, so we may choose a quasi-isomorphism $C_\bullet(\overline S_\alpha) \to C_\bullet^{BM}(\overline S_\alpha,R)$ where $C_\bullet(\overline S_\alpha)$ is again finitely generated in each degree, such that the image contains the image of $\colim_{\beta < \alpha} (C_\bullet(\overline S_\beta))$. Then there is a factorization $$\colim_{\beta < \alpha} (C_\bullet(\overline S_\beta)) \to C_\bullet(\overline S_\alpha) \stackrel \sim \to C_\bullet^{BM}(\overline S_\alpha,R)$$ which proves the claim.
\end{proof}

\bibliographystyle{alpha}
\bibliography{../database}

\begin{thebibliography}{DGMS75}

\bibitem[BBD82]{bbd}
Alexander~A. Be{\u\i}linson, Joseph Bernstein, and Pierre Deligne.
\newblock Faisceaux pervers.
\newblock In {\em Analysis and topology on singular spaces, {I} ({L}uminy,
  1981)}, volume 100 of {\em Ast\'erisque}, pages 5--171. Soc. Math. France,
  Paris, 1982.

\bibitem[BE97]{bjornerekedahlsubspace}
Anders Bj{\"o}rner and Torsten Ekedahl.
\newblock Subspace arrangements over finite fields: cohomological and
  enumerative aspects.
\newblock {\em Adv. Math.}, 129(2):159--187, 1997.

\bibitem[Be{\u\i}87]{beilinsonperverse}
Alexander~A. Be{\u\i}linson.
\newblock On the derived category of perverse sheaves.
\newblock In {\em {$K$}-theory, arithmetic and geometry ({M}oscow,
  1984--1986)}, volume 1289 of {\em Lecture Notes in Math.}, pages 27--41.
  Springer, Berlin, 1987.

\bibitem[BG91]{benderskygitler}
Martin Bendersky and Sam Gitler.
\newblock The cohomology of certain function spaces.
\newblock {\em Trans. Amer. Math. Soc.}, 326(1):423--440, 1991.

\bibitem[CEF15]{churchellenbergfarb}
Thomas Church, Jordan~S. Ellenberg, and Benson Farb.
\newblock F{I}-modules and stability for representations of symmetric groups.
\newblock {\em Duke Math. J.}, 164(9):1833--1910, 2015.

\bibitem[CEFN14]{cefn}
Thomas Church, Jordan~S. Ellenberg, Benson Farb, and Rohit Nagpal.
\newblock F{I}-modules over {N}oetherian rings.
\newblock {\em Geom. Topol.}, 18(5):2951--2984, 2014.

\bibitem[CF13]{churchfarbrepresentationstability}
Thomas Church and Benson Farb.
\newblock Representation theory and homological stability.
\newblock {\em Adv. Math.}, 245:250--314, 2013.

\bibitem[Chu12]{church}
Thomas Church.
\newblock Homological stability for configuration spaces of manifolds.
\newblock {\em Invent. Math.}, 188(2):465--504, 2012.

\bibitem[CLM76]{cohenladamay}
Frederick~R. Cohen, Thomas~J. Lada, and J.~Peter May.
\newblock {\em The homology of iterated loop spaces}.
\newblock Lecture Notes in Mathematics, Vol. 533. Springer-Verlag, Berlin,
  1976.

\bibitem[CT78]{cohentaylor}
Frederick~R. Cohen and Laurence~R. Taylor.
\newblock Computations of {G}el'fand-{F}uks cohomology, the cohomology of
  function spaces, and the cohomology of configuration spaces.
\newblock In {\em Geometric applications of homotopy theory ({P}roc. {C}onf.,
  {E}vanston, {I}ll., 1977), {I}}, volume 657 of {\em Lecture Notes in Math.},
  pages 106--143. Springer, Berlin, 1978.

\bibitem[Del71]{hodge2}
Pierre Deligne.
\newblock Th\'eorie de {H}odge. {II}.
\newblock {\em Inst. Hautes \'Etudes Sci. Publ. Math.}, (40):5--57, 1971.

\bibitem[DGMS75]{dgms}
Pierre Deligne, Phillip Griffiths, John Morgan, and Dennis Sullivan.
\newblock Real homotopy theory of {K}\"ahler manifolds.
\newblock {\em Invent. Math.}, 29(3):245--274, 1975.

\bibitem[FW15]{wolfsonfarb}
Benson {Farb} and Jesse Wolfson.
\newblock {\'Etale homological stability and arithmetic statistics}.
\newblock Preprint. arXiv:1512.00415, 2015.

\bibitem[Get99]{getzler99}
Ezra Getzler.
\newblock Resolving mixed {H}odge modules on configuration spaces.
\newblock {\em Duke Math. J.}, 96(1):175--203, 1999.

\bibitem[GJ94]{getzlerjones}
Ezra Getzler and John D.~S. Jones.
\newblock Operads, homotopy algebra and iterated integrals for double loop
  spaces.
\newblock Unpublished. arXiv:hep-th/9403055, 1994.

\bibitem[GM88]{goreskymacphersonstratifiedmorsetheory}
Mark Goresky and Robert MacPherson.
\newblock {\em Stratified {M}orse theory}, volume~14 of {\em Ergebnisse der
  Mathematik und ihrer Grenzgebiete (3)}.
\newblock Springer-Verlag, Berlin, 1988.

\bibitem[GM03]{methodsofhomologicalalgebra}
Sergei~I. Gelfand and Yuri~I. Manin.
\newblock {\em Methods of homological algebra}.
\newblock Springer Monographs in Mathematics. Springer-Verlag, Berlin, second
  edition, 2003.

\bibitem[Ill71]{illusiecotangent}
Luc Illusie.
\newblock {\em Complexe cotangent et d\'eformations. {I}}.
\newblock Lecture Notes in Mathematics, Vol. 239. Springer-Verlag, Berlin-New
  York, 1971.

\bibitem[Joy86]{joyalanalyticfunctors}
Andr{\'e} Joyal.
\newblock Foncteurs analytiques et esp\`eces de structures.
\newblock In {\em Combinatoire \'enum\'erative ({M}ontreal, {Q}ue.,
  1985/{Q}uebec, {Q}ue., 1985)}, volume 1234 of {\em Lecture Notes in Math.},
  pages 126--159. Springer, Berlin, 1986.

\bibitem[Kly74]{klyachko}
Aleksandr~A. Klyachko.
\newblock Lie elements in the tensor algebra.
\newblock {\em Sibirsk. Mat. Zh.}, 15:914--920, 1974.

\bibitem[KMT14]{kupersmillertran}
Alexander Kupers, Jeremy Miller, and TriThang Tran.
\newblock {Homological stability for symmetric complements}.
\newblock To appear in \emph{Trans. Amer. Math. Soc.} Preprint.
  arXiv:1410.5497, 2014.

\bibitem[Loo93]{looijengaM3}
Eduard Looijenga.
\newblock Cohomology of {$\mathscr{M}_3$} and {$\mathscr{M}^1_3$}.
\newblock In {\em Mapping class groups and moduli spaces of {R}iemann surfaces
  ({G}\"ottingen, 1991/{S}eattle, {WA}, 1991)}, volume 150 of {\em Contemp.
  Math.}, pages 205--228. Amer. Math. Soc., Providence, RI, 1993.

\bibitem[McD75]{mcduff}
Dusa McDuff.
\newblock Configuration spaces of positive and negative particles.
\newblock {\em Topology}, 14:91--107, 1975.

\bibitem[Nag15]{nagpalmodular}
Rohit Nagpal.
\newblock {FI-modules and the cohomology of modular representations of
  symmetric groups}.
\newblock Preprint. arXiv:1505.04294, 2015.

\bibitem[OS80]{orliksolomon}
Peter Orlik and Louis Solomon.
\newblock Combinatorics and topology of complements of hyperplanes.
\newblock {\em Invent. Math.}, 56(2):167--189, 1980.

\bibitem[Sai90]{saitomixedhodge}
Morihiko Saito.
\newblock Mixed {H}odge modules.
\newblock {\em Publ. Res. Inst. Math. Sci.}, 26(2):221--333, 1990.

\bibitem[Sta82]{stanleygroupactionsonposets}
Richard~P. Stanley.
\newblock Some aspects of groups acting on finite posets.
\newblock {\em J. Combin. Theory Ser. A}, 32(2):132--161, 1982.

\bibitem[Tot96]{totaro}
Burt Totaro.
\newblock Configuration spaces of algebraic varieties.
\newblock {\em Topology}, 35(4):1057--1067, 1996.

\bibitem[Vas92]{vassilievbook}
Victor~A. Vassiliev.
\newblock {\em Complements of discriminants of smooth maps: topology and
  applications}, volume~98 of {\em Translations of Mathematical Monographs}.
\newblock American Mathematical Society, Providence, RI, 1992.
\newblock Translated from the Russian by B. Goldfarb.

\bibitem[Vas95]{vassilievicm}
Victor~A. Vassiliev.
\newblock Topology of discriminants and their complements.
\newblock In {\em Proceedings of the {I}nternational {C}ongress of
  {M}athematicians, {V}ol.\ 1, 2 ({Z}\"urich, 1994)}, pages 209--226.
  Birkh\"auser, Basel, 1995.

\bibitem[VW15]{vakilwood}
Ravi Vakil and Melanie~Matchett Wood.
\newblock Discriminants in the {G}rothendieck ring.
\newblock {\em Duke Math. J.}, 164(6):1139--1185, 2015.

\bibitem[Wac07]{wachsposettopology}
Michelle~L. Wachs.
\newblock Poset topology: tools and applications.
\newblock In {\em Geometric combinatorics}, volume~13 of {\em IAS/Park City
  Math. Ser.}, pages 497--615. Amer. Math. Soc., Providence, RI, 2007.

\end{thebibliography}

\end{document}